\newcommand{\HS}{\mathbb{R}^3_{\scalebox{1.0}[1.0]{\,-}}}
\newcommand{\CHS}{\overline{\mathbb{R}^3_{\scalebox{0.5}[1.0]{\( - \)}}}}
\newcommand{\Bm}{B_{\scalebox{0.5}[1.0]{\( - \)}}}
\newcommand{\R}{\mathbb{R}}
\newcommand{\C}{\mathbb{C}}
\newcommand{\nuoli}{\rightarrow}
\newcommand{\Lc}{L_{comp}}
\newcommand{\Hc}{H_{comp}}
\newcommand{\Wc}{W_{comp}}
\newcommand{\Curl}{\nabla \times}
\newcommand{\ov}[1]{\overline{#1}}
\renewcommand\Re{\operatorname{Re}}
\renewcommand\Im{\operatorname{Im}}
\newcommand\supp{\operatorname{supp}}
\newcommand\p{\operatorname{\partial}} 
\newcommand{\Ld}{L_{A,q}}
\newtheorem{thm}{Theorem}[section]
\newtheorem{cor}[thm]{Corollary}
\newtheorem{lem}[thm]{Lemma}
\newtheorem{prop}[thm]{Proposition}
\newtheorem{rem}[thm]{Remark}
\theoremstyle{definition}
\numberwithin{equation}{section}
\begin{document}

\title[]{An Inverse problem for the Magnetic Schr\"odinger  Operator on a Half Space with partial data}
\author[]{Valter Pohjola}
\date{November , 2012}
\keywords{Inverse boundary value problem; Magnetic Schr\"odinger operator; Half space; uniqueness; Partial data.}
\address{Department of Mathematics and Statistics, Helsingin yliopisto / Helsingfors universitet / University of Helsinki, Finland}
\email{valter.pohjola@helsinki.fi}

\begin{abstract} In this paper we prove uniqueness for an inverse boundary value problem 
    for the magnetic Schr\"odinger equation in a half space, with partial data. 
We prove that the curl of the magnetic potential $A$, when $A\in \Wc^{1,\infty}(\CHS,\R^3)$, and 
the electric pontetial $q \in \Lc^{\infty}(\CHS,\C)$
are uniquely determined by the knowledge
of the Dirichlet-to-Neumann map on parts of the boundary of the half space.
\end{abstract}

\maketitle

\section{Introduction}

We consider a magnetic Schrödinger operator $\Ld$, defined by
\begin{eqnarray}
\Ld := \sum_{j=1}^3 \big(-i\partial_j + A_j(x)\big)^2 + q(x), \label{eq:Ldexp}
\end{eqnarray}
on the half space $\HS:=\{ x \in \R^3 \;:\; x_3 < 0 \}$. We assume that 
\begin{align}
A \in \Wc^{1,\infty}(\CHS,\R^3),   \text{ and }
q \in \Lc^{\infty}(\CHS,\C),   \quad \Im q\le 0.
\end{align}
Here 
\[
\Wc^{1,\infty}(\CHS,\R^3):=\{A|_{\CHS}  \; :\, A\in W^{1,\infty}(\R^3, \R^3), \supp(A)\subset 
\R^3 \text{ compact}\}
\]
and similarly, we define
\[
\Lc^{\infty}(\CHS,\C):=\{q\in L^{\infty}(\CHS,\C)\;:\,\supp(q)\subset 
\CHS \text{ compact}\}.
\]

Consider the Dirichlet problem
\begin{equation}
 \left.\begin{aligned}
        (\Ld - k^2) u & = 0 \quad \text{ in $\HS$,}\\
        u |_{\partial \HS} &= f,
       \end{aligned}
 \right.
 \label{eq:BVP}
\end{equation}
where $k>0$ is fixed and  $f \in \Hc^{3/2}(\p\HS)$. 
Here we also require that the solution $u$ should satisfy a 
boundary
condition at infinity, which will be the
\textit{Sommerfeld radiation condition}
\begin{equation}
\lim_{|x| \nuoli \infty} |x| \bigg( \frac{\partial u(x)}{ \partial |x|}
-iku(x)\bigg) = 0.
\label{eq:SRC} 
\end{equation}
Solutions satisfying this condition are called \textit{outgoing}
or \textit{radiating} solutions. We will also occasionally use the term
\textit{incoming} solution. This refers to a solution of \eqref{eq:BVP} that 
satisfies 
\eqref{eq:SRC}, when the factor $-ik$ is replaced by $ik$.
The existence and uniqueness of a solution $u \in H^2_{loc}(\overline{\HS})$ to 
the problem \eqref{eq:BVP} and \eqref{eq:SRC} is proven in \cite{Poh1}.
This allows us to define the so called \textit{Dirichlet 
to Neumann map} $\Lambda_{A,q}$,
(DN-map for short),
$\Lambda_{A,q}:\Hc^{3/2}(\p \HS) \nuoli H^{1/2}_{loc}(\p \HS)$ as
\begin{align*}
f\mapsto(\partial_n + i A\cdot n) u |_{\p \HS},
\end{align*}
where $u$ is the solution of the Dirichlet problem \eqref{eq:BVP}, 
\eqref{eq:SRC} and $f$ is the value of $u$. 
Here $n=(0,0,1)$ is the unit outer normal to the boundary $\p \HS$.

The inverse problem is to investigate if the DN-map uniquely determines the
potentials $A$ and $q$ in $\HS$. It turns out that the DN-map does not
in general uniquely determine $A$. This is due to the  gauge invariance of
the DN-map, which was first noticed by \cite{S1}.
\begin{lem} \label{gauge_inv} 
Let $A \in W^{1,\infty}(\CHS,\R^3)$ and $q \in L^{\infty}(\CHS)$. Then
\begin{enumerate}[(i)]
\item For all $\psi \in C^{1,1}(\CHS,\R)$ we  have 
\[
e^{-i \psi}\Lambda_{A,q} e^{i\psi} =\Lambda_{A+\nabla\psi,q}.
\]
\item  There exists $\psi \in C^{1,1}(\CHS,\R)$ with $\psi|_{\{x_3=0\}} = 0$,
for which
\[
\Lambda_{A,q} = \Lambda_{A+\nabla\psi,q}
\]
and $(A+\nabla\psi)|_{\{x_3=0\}} = (A_1,A_2,0)$.
\end{enumerate}
\end{lem}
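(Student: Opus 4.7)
The plan is to prove (i) by establishing an operator-level intertwining identity, and then to deduce (ii) by an explicit construction of $\psi$ followed by an appeal to (i).

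For (i), the key observation is the identity $\Ld \circ e^{i\psi} = e^{i\psi} \circ L_{A+\nabla\psi,q}$, obtained from the product rule $(-i\partial_j + A_j)(e^{i\psi}v) = e^{i\psi}(-i\partial_j + A_j + \partial_j\psi)v$ by iterating, summing over $j$, and adding $q$. Given $f \in \Hc^{3/2}(\p \HS)$, I would let $v$ be the outgoing solution of the BVP for $L_{A+\nabla\psi,q}$ with Dirichlet data $g := e^{-i\psi|_{\p \HS}}f$ and put $u := e^{i\psi}v$; then $u$ solves $(\Ld - k^2)u = 0$ with $u|_{\p \HS} = f$, and remains outgoing provided $\nabla\psi$ is suitable at infinity (automatic when $\psi$ is compactly supported, which is the situation in (ii)). The boundary product rule
\begin{equation*}
(\partial_n + iA\cdot n)(e^{i\psi}v)\big|_{\p \HS} = e^{i\psi|_{\p \HS}}\big(\partial_n + i(A+\nabla\psi)\cdot n\big)v\big|_{\p \HS},
\end{equation*}
combined with the definition of the DN-map, then yields $\Lambda_{A,q} = e^{i\psi|_{\p \HS}}\Lambda_{A+\nabla\psi,q} e^{-i\psi|_{\p \HS}}$, which is the claim.

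For (ii), I would take
\begin{equation*}
\psi(x_1, x_2, x_3) := -\int_0^{x_3} A_3(x_1, x_2, s)\,ds,
\end{equation*}
multiplied by a smooth $x_3$-cutoff equal to $1$ near $x_3 = 0$ to ensure compact support. By construction $\psi|_{\p \HS} = 0$ and $\partial_3\psi|_{\p \HS} = -A_3|_{\p \HS}$, and the tangential derivatives $\partial_i\psi|_{\p \HS}$ vanish automatically since $\psi|_{\p \HS} \equiv 0$; thus $(A + \nabla\psi)|_{\p \HS} = (A_1, A_2, 0)$. Applying (i) with this $\psi$ trivializes the conjugating factors and yields $\Lambda_{A,q} = \Lambda_{A+\nabla\psi,q}$.

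The main technical obstacle is to confirm the regularity $\psi \in C^{1,1}(\CHS,\R)$. While $\partial_3\psi = -A_3$ is Lipschitz, the tangential derivatives $\partial_i\psi = -\int_0^{x_3}\partial_i A_3\,ds$ ($i = 1,2$) are a priori only in $L^\infty$ when $A \in W^{1,\infty}$, because second derivatives of $A_3$ need not be bounded. One workaround is to replace the line integral by $\psi(x) = -x_3\,\widetilde A_3(x)\,\chi(x_3)$, where $\widetilde A_3$ is the Poisson (harmonic) extension of the Lipschitz trace $A_3|_{\p \HS}$ to $\HS$ and $\chi$ is a smooth cutoff equal to $1$ near $x_3 = 0$; the standard elliptic estimate $|\nabla^2 \widetilde A_3(x)| \lesssim 1/|x_3|$ together with Lipschitz boundedness of $\widetilde A_3$ up to the boundary then gives $\nabla\psi \in \mathrm{Lip}(\CHS)$. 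A secondary point, automatic once $\psi$ has compact support, is preservation of the Sommerfeld radiation condition under the gauge transformation $v \mapsto e^{i\psi}v$.
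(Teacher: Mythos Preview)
The paper does not give a proof here; it simply refers to \cite{Poh1}. Your argument for (i) is the standard conjugation identity and is correct, with the radiation-condition caveat you already flagged being harmless once $\psi$ is constant outside a compact set.

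For (ii) you correctly observe that the naive primitive $\psi(x)=-\int_0^{x_3}A_3(x',s)\,ds$ need not lie in $C^{1,1}$ when $A\in W^{1,\infty}$, since the mixed second derivatives $\partial_i\partial_j\psi=-\int_0^{x_3}\partial_i\partial_j A_3\,ds$ ($i,j\in\{1,2\}$) are not controlled. Your Poisson-extension repair $\psi=-x_3\,\widetilde A_3\,\chi(x_3)$ does produce $\nabla^2\psi\in L^\infty$ via the interior bound $|\nabla^2\widetilde A_3(x)|\lesssim |x_3|^{-1}$ for the harmonic extension of a Lipschitz trace. One residual point worth flagging: even when $A$ is compactly supported, $\widetilde A_3$ is not, so $A+\nabla\psi$ may leave the class $\Wc^{1,\infty}$ in which the paper's DN-map is set up; this is easily repaired by multiplying your $\psi$ by a further smooth cutoff $\eta(x_1,x_2)$ equal to $1$ on the support of $A_3|_{x_3=0}$, since $\psi|_{x_3=0}=0$ guarantees that the boundary identity $(A+\nabla(\psi\eta))|_{x_3=0}=(A_1,A_2,0)$ persists and $\psi\eta\in C^{1,1}$ is then compactly supported.
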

\begin{proof}  See \cite{Poh1}.
\end{proof}
Part \textit{(ii)} of this Lemma shows that $\Lambda_{A,q}$ cannot uniquely
determine $A$, since we can change a potential by a gauge transformation without
changing the DN-map.
The DN-map does however carry enough information to determine
$\nabla\times A$, which is the magnetic field in the context of electrodynamics. 

When considering a pair of magnetic potentials $A_j$, $j=1,2$,
we use the notation  $A_j=(A_{j,1},A_{j,2},A_{j,3})$ for the component
functions.
Furthermore we let $N \subset \CHS$ be a relatively open and bounded 
set, for which
\begin{align*}
    \bigcup_{j=1,2} \supp(A_j) \cup \supp(q_j) \subset N,
\end{align*}
and for which $\p N$ piecewise $C^2$ and $\HS\setminus \ov{N}$ is connected.

We now state the main result of this paper, which generalizes the 
corresponding results of \cite{LCU1}, obtained in the case of the  
Schr\"odinger operator without a magnetic potential. 
\begin{thm}
\label{thm_2_inverse}
Let $A_j\in \Wc^{1,\infty}(\CHS,\R^3)$ and $q_j \in \Lc^{\infty}(\CHS,\C)$ be 
such that  $\Im q_j\le 0$, $j=1,2$. 
Let $\Gamma_1,\Gamma_2\subset\p \R^3_-$ be open sets such that
\begin{align}
(\p \HS \setminus \ov{N}) \cap \Gamma_j \neq \emptyset, \quad j=1,2.
\label{eq_gamma}
\end{align}
Then if 
\begin{equation}
\label{eq_data_inv}
\Lambda_{A_1,q_1}(f)|_{\Gamma_1}=\Lambda_{A_2,q_2}(f)|_{\Gamma_1},
\end{equation}
for all $f\in \Hc^{3/2} (\p \HS)$, $\supp(f)\subset 
\Gamma_2$, then 
\[
\nabla\times A_1=\nabla \times A_2\quad\text{and}\quad q_1=q_2\quad 
\text{in}\quad \HS.
\]
\end{thm}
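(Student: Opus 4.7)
The plan is to adapt the complex geometric optics (CGO) plus reflection strategy used in \cite{LCU1} for the purely electric half-space problem to the present magnetic setting. After a preliminary gauge normalization I derive a bilinear integral identity from the hypothesis \eqref{eq_data_inv}, plug in CGO solutions built via Isakov's reflection trick, and recover $\nabla\times A$ and then $q$ by Fourier inversion in the usual ``Bukhgeim--Uhlmann'' fashion.

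First, using Lemma \ref{gauge_inv}(ii), I replace each $A_j$ by a gauge-equivalent potential whose normal component vanishes on $\p\HS$; this preserves both $\nabla\times A_j$ and the DN-map, and is precisely the condition needed to run the reflection argument below. Pairing the difference of the DN-maps against pairs of solutions through Green's identity then yields, schematically, an integral identity of the form
\begin{equation*}
\int_{\HS}\Big[-i(A_1-A_2)\cdot(u_1\nabla u_2-u_2\nabla u_1)+(|A_1|^2-|A_2|^2+q_1-q_2)\,u_1 u_2\Big]\,dx=0,
\end{equation*}
where $u_1,u_2$ are outgoing/incoming solutions of the two magnetic Schr\"odinger equations (and of the relevant formal adjoint) with Dirichlet traces supported in $\Gamma_2$, respectively $\Gamma_1$.

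Second, on $\R^3$ I construct magnetic CGO solutions $u_j=e^{ix\cdot\zeta_j}(a_j+r_j)$ with $\zeta_j\in\C^3$, $\zeta_j\cdot\zeta_j=k^2$, $|\Im\zeta_j|\to\infty$, the amplitude $a_j$ solving a $\p$-type transport equation encoding $A_j$, and $r_j=O(|\zeta_j|^{-1})$ in $L^2(N)$. To reconcile these CGOs with the partial-data identity I apply Isakov's reflection across the plane $\{x_3=0\}$: the vectors $\zeta_1,\zeta_2$ are chosen so that $\zeta_1+\zeta_2$ is real and tangential to $\p\HS$, and each CGO is combined with its image under $R(x_1,x_2,x_3)=(x_1,x_2,-x_3)$. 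The normalization $A_{j,3}|_{\{x_3=0\}}=0$ is precisely what makes the even extensions of $A_{j,1},A_{j,2}$ together with the odd extension of $A_{j,3}$ fit into a legitimate reflected magnetic Schr\"odinger operator on all of $\R^3$. Combined with the connectedness of $\HS\setminus\ov N$ and the hypothesis \eqref{eq_gamma}, a Runge-type approximation argument then allows me to use the integral identity with these extended CGO pairs. Letting $|\Im\zeta_j|\to\infty$, the leading $O(|\zeta|)$ term, after Fourier inversion in the tangential variable $\zeta_1+\zeta_2$ and after varying the asymptotic direction of the imaginary parts, gives $\widehat{d(A_1-A_2)}\equiv 0$, hence $\nabla\times A_1=\nabla\times A_2$; a subsequent gauge change cancels the magnetic term, and a second Fourier inversion of the surviving $O(1)$ contribution yields $q_1=q_2$.

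The main obstacle is the coupling of three delicate ingredients specific to the unbounded half-space geometry: the Sommerfeld radiation condition, the Isakov reflection trick, and the Runge-type density argument that upgrades the partial-trace identity to all admissible CGO pairs. The Sommerfeld condition must be compatible with the reflection so that closing the Alessandrini pairing produces no residual term at infinity, and the reflected coefficients (only continuous off $\{x_3=0\}$ in the third component) must be controlled within the magnetic CGO construction rather than in the smoother setting of \cite{LCU1}. The hypothesis \eqref{eq_gamma} that $\Gamma_j$ meets the unbounded exterior component $\p\HS\setminus\ov N$ is essential because this is exactly what makes the Runge approximation feasible on that connected exterior, and hence what allows the partial data to carry as much information as full boundary data.
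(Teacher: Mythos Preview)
Your outline follows the same architecture as the paper's proof (gauge normalization, integral identity, Runge density, reflected CGOs, Fourier inversion), but it glosses over one step that is \emph{not} routine and is precisely what distinguishes the magnetic from the non-magnetic half-space problem.

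After you plug in the reflected CGOs and send $h\to 0$, the leading term is not a clean Fourier transform of $A_1-A_2$. What survives is
\[
(\gamma_1+i\gamma_2)\cdot\int_B(\tilde A_2-\tilde A_1)\,e^{ix\cdot\xi}\,e^{\Phi_1^0+\overline{\Phi_2^0}}\,dx=0,
\]
where the amplitude $e^{\Phi_1^0+\overline{\Phi_2^0}}$ itself depends on $\tilde A_1,\tilde A_2$ through the transport equations and does \emph{not} disappear in the limit. Getting from this to $(\gamma_1+i\gamma_2)\cdot\widehat{(\tilde A_2-\tilde A_1)}(\xi)=0$ is a separate argument: one rewrites the integrand as $g\,\partial_{\bar z}e^{\Psi}$, exploits the freedom to insert any holomorphic $g$ in the CGO amplitude, and then uses a Cauchy-transform / Plemelj--Sokhotski / argument-principle computation to strip off the exponential. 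This is the content of the paper's Proposition~\ref{prop_32} and is exactly the obstacle that forced Sun's original result to carry a smallness assumption on $A$. Your phrase ``after Fourier inversion \ldots gives $\widehat{d(A_1-A_2)}\equiv 0$'' hides this entire step; without it the argument does not close.

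A secondary point: the reflection $u_j(x)=\tilde u_j(x)-\tilde u_j(\tilde x)$ produces cross terms with phases $e^{ix\cdot\xi_\pm \pm 2\gamma_{1,3}x_3/h}$. These are controlled only under the specific constraint $\gamma_{1,3}=0$ (so the phases are purely oscillatory, no exponential blow-up) \emph{and} $\gamma_{2,3}\neq 0$ (so $|\xi_\pm|\to\infty$ and Riemann--Lebesgue kills them). Your condition ``$\zeta_1+\zeta_2$ real and tangential'' is not the right one; it is the \emph{real part} of each $\zeta_j$ that must be tangential, while the imaginary part must have a nonzero normal component. This restricts the admissible $\xi$ to $\R^3$ minus the vertical axis, and the missing direction is recovered at the end by analyticity of the Fourier transform.
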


We would like to emphasize that in Theorem \ref{thm_2_inverse}, the set 
$\Gamma_1$, where measurements are performed, and the set $\Gamma_2$, where the 
data is supported,  can be taken arbitrarily small, provided that 
\eqref{eq_gamma} holds.  The result of Theorem \ref{thm_2_inverse}  pertains 
therefore to the inverse problems with partial data. Such problems are important 
from the point of view of applications, since in practice, performing 
measurements on the entire boundary could be impossible, due to limitations in 
resources or obstructions from obstacles.  

The first uniqueness result, 
in the context of inverse boundary value problems for the magnetic 
Schr\"o\-dinger operator on a bounded domain, was 
obtained by Sun in \cite{S1}, under a smallness condition on $A$. 
Nakamura, Sun and Uhlmann proved the uniqueness without any smallness condition 
in \cite{NSU1}, assuming that $A \in C^2$. Tolmasky extended this result to
$C^1$ magnetic potentials in \cite{Tol1}, and Panchenko to some less regular but small
magnetic potentials in \cite{Pan1}.
Salo proved uniqueness for Dini continuous magnetic potentials in \cite{MS1}. 
The most recent result is given by Krupchyk and Uhlmann in \cite{Kru1}, where
uniqueness is proved for $L^\infty$ magnetic potentials.
In all of these works, the inverse boundary value 
problem with full data was considered. 

In \cite{ER1}, Eskin and Ralston obtained a uniqueness result for the closely 
related inverse scattering problem, assuming the exponential decay of the 
potentials.
The partial data problem in the magnetic case was considered by Dos Santos 
Ferreira, Kenig, Sjöstrand and Uhlmann in \cite{DosSantos1} and by Chung 
in \cite{Chung1}.

The inverse problem for the half space geometry, without a magnetic potential 
was examined by Cheney and Isaacson in \cite{CI1}. The uniqueness for this 
problem in the case of compactly supported electric potentials was proved
by Lassas, Cheney and Uhlmann in \cite{LCU1}, assuming that the supports do not 
come close to the boundary of the half space. The result of  Theorem 
\ref{thm_2_inverse} is therefore already a generalization of the work  
\cite{LCU1}, even in the absence of magnetic potentials. Li and Uhlmann  proved 
uniqueness for the closely related infinite slab geometry with $A=0$, in 
\cite{LU1}. Krupchyk, Lassas and Uhlmann did this for the magnetic case in
\cite{KLU1}. In both of these works, the reflection argument of Isakov 
\cite{I1} played an important role. The uniqueness problem for the magnetic potentials
in the slab and half space geometries 
has also been studied in a recent paper by Li \cite{Li1}. The half space results
in \cite{Li1} differ from the ones given in this work, by concerning the more general 
matrix valued Schrödinger equation and by assuming $C^6$ regularity on the  
magnetic potential.

The half space is perhaps the simplest example of an unbounded region with an 
unbounded boundary. It is of special interest in many applications, such as 
geophysics, ocean acoustics, and optical tomography, 
since it provides a simple model for semi infinite geometries. We would like to mention that 
the magnetic Schrödinger equation is closely related to the diffusion 
approximation of the photon transport equation, used in optical
tomography \cite{SA1}. 

The paper is organized as follows.
Section 2 contains a review of the construction of complex geometric optics  
solutions   for magnetic Schr\"odinger operators with Lipschitz continuous 
magnetic potentials. 
 In section 3 we derive the central integral identity. 
The proof of Theorem \ref{thm_2_inverse} is contained in 
sections 4 and 5. The appendix contains an extension of Green's second formula
and a statement of the unique continuation principle for easy reference.

\section{Complex geometric optics solutions}
Let $\Omega\subset \R^3$  be a bounded domain with $C^\infty$-boundary, and let 
$A\in W^{1,\infty}(\Omega,\R^3)$,  $q\in L^\infty(\Omega,\C)$.
The task of this subsection is to review the construction of  complex geometric 
optics solutions for  the magnetic Schr\"odinger equation,
\begin{equation}
\label{eq_Sch_lip}
L_{A,q}u=0\quad \textrm{in}\quad \Omega. 
\end{equation}
A complex geometric optics solution to \eqref{eq_Sch_lip} is a solution of the form
\begin{equation}
\label{eq_cgo_lip}
u(x,\zeta;h)=e^{x\cdot\zeta/h}(a(x,\zeta;h)+r(x,\zeta;h)),
\end{equation}
where $\zeta\in \C^3$, $\zeta\cdot\zeta=0$, $a$ is a smooth 
amplitude,  $r$ is a remainder, and $h>0$ is a small parameter.

In the case when $A\in C^2(\overline{\Omega})$ and $q\in L^\infty(\Omega)$, 
such solutions were constructed in \cite{DosSantos1} using the method of 
Carleman estimates, and the construction was extended to the case of less 
regular potentials in \cite{KnuSalo}, see also \cite{KLU1}.

Let $\varphi(x) =
\alpha \cdot x$, $\alpha \in \R^3$, $|\alpha|=1$. The fundamental role in the 
construction  of complex geometric optics solutions is played by the following 
Carleman estimate, 
\begin{equation}
\label{eq_Carleman_est}
h\|u \|_{H^1_{\textrm{scl}}(\Omega)}\le C\|e^{\varphi/h}h^2L_{A,q} 
e^{-\varphi/h} u\|_{L^2(\Omega)},
\end{equation}
valid for all $u\in C^\infty_0(\Omega)$ and $0<h\le h_0$, which was proved in 
\cite{DosSantos1} and \cite{KnuSalo}.  Here 
$\|u\|_{H^1_{\textrm{scl}}(\Omega)}=\|u\|_{L^2(\Omega)}+\|h\nabla 
u\|_{L^2(\Omega)}$. 

Based on the estimate \eqref{eq_Carleman_est}, the following solvability result 
was established in \cite[Proposition 4.3]{KnuSalo}.
\begin{prop}
\label{prop_solvability}
Let $A \in W^{1,\infty}(\Omega,\R^3)$, $q \in
L^{\infty}(\Omega,\C)$, $\alpha \in \R^3$, $|\alpha|=1$ and $\varphi(x) =
\alpha \cdot x$. Then there is  $C > 0$ and  $h_0 > 0 $ such that for all $h\in 
(0,h_0]$, and any $f\in L^2(\Omega)$, the equation
\[
e^{\varphi/h}h^2L_{A,q} e^{-\varphi/h} u=f\quad\textrm{in}\quad \Omega,
\]
has a solution $u\in H^1(\Omega)$ with 
\begin{align*}
\|u \|_{H^1_{\emph{\textrm{scl}}}(\Omega)}\le \frac{C}{h}\|f\|_{L^2(\Omega)}. 
\end{align*}
\end{prop}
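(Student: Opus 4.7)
The plan is to follow the standard duality argument for solvability from Carleman estimates, and then upgrade the regularity using the ellipticity of the conjugated operator. Set $P_\varphi := e^{\varphi/h} h^2 L_{A,q} e^{-\varphi/h}$. Since $A$ is real valued, each operator $(-i\partial_j + A_j)^2$ is formally self-adjoint on $L^2$, so the formal $L^2$-adjoint of $L_{A,q}$ is $L_{A,\overline q}$. Consequently
\[
P_\varphi^* = e^{-\varphi/h} h^2 L_{A,\overline q} e^{\varphi/h},
\]
which has the same structure as $P_\varphi$ but with $-\alpha$ (still a unit vector) in place of $\alpha$, and $\overline q \in L^\infty(\Omega,\C)$ in place of $q$. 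The Carleman estimate \eqref{eq_Carleman_est} therefore applies verbatim to $P_\varphi^*$, giving
\[
h\|v\|_{H^1_{\textrm{scl}}(\Omega)} \le C\|P_\varphi^* v\|_{L^2(\Omega)}
\]
for all $v \in C_0^\infty(\Omega)$ and all $h \in (0,h_0]$.

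Given $f \in L^2(\Omega)$, I would then define a linear functional $T$ on the subspace $\{P_\varphi^* v : v \in C_0^\infty(\Omega)\} \subset L^2(\Omega)$ by $T(P_\varphi^* v) := \langle v, f\rangle_{L^2(\Omega)}$. The adjoint Carleman estimate makes $P_\varphi^*$ injective on $C_0^\infty(\Omega)$, so $T$ is well defined, and
\[
|T(P_\varphi^* v)| \le \|v\|_{L^2(\Omega)}\|f\|_{L^2(\Omega)} \le \frac{C}{h}\|f\|_{L^2(\Omega)}\|P_\varphi^* v\|_{L^2(\Omega)}.
\]
Extending $T$ to all of $L^2(\Omega)$ via Hahn--Banach and applying the Riesz representation theorem produces $u \in L^2(\Omega)$ with $\|u\|_{L^2(\Omega)} \le (C/h)\|f\|_{L^2(\Omega)}$ satisfying $\langle P_\varphi^* v, u\rangle = \langle v, f\rangle$ for every $v \in C_0^\infty(\Omega)$; that is, $P_\varphi u = f$ in $\mathcal D'(\Omega)$.

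To upgrade this to the stated $H^1_{\textrm{scl}}$ bound, I would exploit the ellipticity of $P_\varphi = -h^2 \Delta + R$, where $R$ is a first-order semiclassical differential operator whose coefficients are bounded in terms of $\|A\|_{W^{1,\infty}}$, $\|q\|_{L^\infty}$, and $|\alpha|$. A clean route is to carry out the construction above in a slightly larger domain $\widetilde\Omega \Supset \Omega$, after extending $A$, $q$, $f$ appropriately (with $f$ extended by zero), and then apply interior semiclassical elliptic regularity on $\Omega \Subset \widetilde\Omega$ to control $\|h\nabla u\|_{L^2(\Omega)}$ by a constant multiple of $\|u\|_{L^2(\widetilde\Omega)} + \|f\|_{L^2(\widetilde\Omega)}$, with constants independent of $h$. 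Combined with the $L^2$ bound already obtained, this gives $\|u\|_{H^1_{\textrm{scl}}(\Omega)} \le (C/h)\|f\|_{L^2(\Omega)}$.

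The main obstacle is tracking the semiclassical scaling throughout the $L^2 \to H^1_{\textrm{scl}}$ upgrade, so that the final constant remains of order $1/h$ rather than degrading to some worse power of $h$. Because no boundary data is prescribed for $u$, one is forced to use interior rather than global elliptic estimates, and it is precisely to avoid losing control near $\partial \Omega$ that one solves first on an enlarged domain and restricts.
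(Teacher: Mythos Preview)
The paper does not prove this proposition; it quotes it from \cite[Proposition~4.3]{KnuSalo}. Your Hahn--Banach duality argument from the Carleman estimate is exactly the standard mechanism behind that result, and the computation of the adjoint $P_\varphi^* = e^{-\varphi/h}h^2 L_{A,\overline q}\,e^{\varphi/h}$ together with the observation that the estimate \eqref{eq_Carleman_est} applies to it (with $-\alpha$ in place of $\alpha$) is correct.

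Where your route diverges from the usual one is in the $L^2 \to H^1_{\textrm{scl}}$ upgrade. In \cite{KnuSalo} and related works one typically proves the Carleman estimate in the sharper form $h\|v\|_{H^1_{\textrm{scl}}}\le C\|P_\varphi^* v\|_{H^{-1}_{\textrm{scl}}}$ (by first establishing it for the conjugated Laplacian in the full semiclassical Sobolev scale, then absorbing the $O(h)$ lower-order terms). Duality then lands the solution directly in $H^1_{\textrm{scl}}$ with the stated bound, with no separate regularity step. Your alternative via interior semiclassical elliptic regularity on an enlarged domain is legitimate, but it is not as immediate as you suggest: the conjugated operator $P_\varphi=-h^2\Delta+2\alpha\cdot h\nabla-1+O(h)$ has $O(1)$ first-order terms, so a single cutoff estimate does not close. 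One has to use that the commutator $[P_\varphi,\chi]$ carries an extra factor of $h$, iterate with nested cutoffs, and feed in a crude (possibly $h$-dependent) a~priori $H^1_{\textrm{loc}}$ bound from classical elliptic regularity to terminate the recursion. This works, but the shifted Carleman estimate is cleaner and is what the cited reference actually does.
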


Our basic strategy in constructing solutions of the form \eqref{eq_cgo_lip} 
is to write \eqref{eq_Sch_lip}, as
\begin{align}
\label{eq_split}
L_{\zeta} r = -L_{\zeta} a,
\end{align}
where $L_{\zeta} :=  e^{-x\cdot \zeta/h} h^2L_{A,q} e^{x\cdot \zeta/h}$. 
Then we first  search for a suitable $a$, after which we will get $r$ by
Proposition \ref{prop_solvability}. We must however take some care in choosing
$a$ and the way it depends on $h$, since we need later that 
$\| r \|_{H^1_{\emph{\textrm{scl}}}(\Omega)} \to 0$, sufficiently fast as $h \to 0$.
We need $a$ also to be smooth enough. This will be handled as in 
\cite{KnuSalo}.

We extend $A\in W^{1,\infty}(\Omega, \R^3)$ to a Lipschitz vector field, 
compactly supported  in $\tilde \Omega$, where $\tilde \Omega\subset \R^3$ is 
an open bounded set such that $\Omega\subset\subset \tilde \Omega$. We consider 
the mollification $A^\sharp := A*\psi_\epsilon\in
C_0^\infty(\tilde \Omega,\R^3)$.
Here $\epsilon>0$ is small and 
$\psi_\epsilon(x)=\epsilon^{-3}\psi(x/\epsilon)$ is the usual mollifier with 
$\psi\in C^\infty_0(\R^3)$, $0\le \psi\le 1$, and 
$\int \psi dx=1$.  We write 
$A^\flat=A-A^\sharp$. Notice that we have the following estimates for
$A^\flat$,
\begin{equation}
\label{eq_flat_est}
\|A^\flat\|_{L^\infty(\Omega)}=\mathcal{O}(\epsilon),
\end{equation}
\[
 \|\p^\alpha A^\sharp\|_{L^\infty(\Omega)}=\mathcal{O}(\epsilon^{-|\alpha|}) \quad 
\textrm{for all}\quad \alpha,
\]
as $\epsilon\to 0$. 

We shall work with a complex  $\zeta = \zeta_0+\zeta_1$ depending slightly on $h$, for which
\begin{align} \label{eq_zassum}
&\zeta\cdot \zeta = 0,\;
\zeta_0:= \alpha + i \beta,\; \alpha,\beta \in S^2,\; \alpha \cdot \beta=0, \\
&\zeta_0 \, \text{ independent of $h$ and }
\;\zeta_1=\mathcal{O}(h),\; \text{as $h\to 0$}. \nonumber
\end{align}
By expanding the conjugated operator we write the right hand
side of \eqref{eq_split} as
\begin{align}
\label{eq_La}
L_\zeta a 
=&(-h^2\Delta 
-2i(-i\zeta_0+hA)\cdot h\nabla -2\zeta_1\cdot h\nabla+h^2A^2 \nonumber \\
&-2ih\zeta_0\cdot (A^\sharp+A^\flat)-2ih\zeta_1\cdot A 
-ih^2(\nabla\cdot A)+h^2q)a. 
\end{align}
Now we want $a$ to be such that this expression decays more rapidly than
$\mathcal{O}(h)$, as $h \to 0$.

Consider the operator in \eqref{eq_La}, ignoring for the time being $a$ and its
possible dependence on $h$. We would like to eliminate from this operator
the terms that are of first order in $h$.
Notice first that $\zeta_1 = \mathcal{O}(h)$ and that we can control 
$\|A^\flat\|_{L^\infty(\Omega)}$ with $h$,
if we choose $\epsilon$ to be dependent on $h$.
Then in an attempt to eliminate first order terms in $h$, it is
natural to search for an $a$ for which
\begin{align}
\label{eq_az0eq}
\zeta_0 \cdot \nabla  a =  -i \zeta_0 \cdot A^\sharp  a, \quad \text{ in
$\Omega$}.
\end{align}

We will look for a solution of the form $a=e^\Phi$. The above equation becomes then
\begin{align} \label{eq_trans1}
\zeta_0 \cdot \nabla \Phi =  
-i \zeta_0 \cdot A^\sharp  , \quad \text{ in $\Omega$}.
\end{align}
Pick a
$\gamma \in S^2$, such that $ \gamma \bot \{  \alpha, 
\beta\}$. 

Next we consider the above equation in coordinates $y$, associated with the
basis $\{\alpha,\beta,\gamma\}$. Let $T$ be the coordinate transform
$y = Tx:=( x \cdot \alpha, x \cdot \beta, x \cdot \gamma)$.
Using the chain rule and the fact that $T^{-1} = T^*$, 
one gets that\footnote{Here $T^*$ is the transpose of $T$.}
\begin{align*}
\nabla(\Phi \circ T^{-1})(Tx) %&= [\nabla \Phi]_x T^{-1} 
= T [\nabla \Phi(x)]^*.
\end{align*}
We therefore have that
\begin{align*}
(1,i,0) \cdot \nabla(\Phi \circ T^{-1})(Tx) 
&= (1,i,0) \cdot T [\nabla \Phi(x)]^* \\
&=
(\alpha \cdot\nabla+ i\beta\cdot\nabla)\Phi(x) \\
&= \zeta_0 \cdot \nabla\Phi(x).
\end{align*}
Equation \eqref{eq_trans1} gives hence the $\bar\p$-equation 
\begin{align} \label{eq_dbar}
2\p_{\bar z} \cdot (\Phi \circ T^{-1}) (y)   
 = -i \zeta_0 \cdot (A^\sharp \circ T^{-1})(y),
\end{align}
where $\p_{\bar z} = (\p_{y_1}+i\p_{y_2})/2$.
We will solve this using the Cauchy operator
\begin{align*}
N^{-1}f (x) := \frac{1}{\pi} \int_{\R^2} 
\frac{1}{s_1 + is_2} f(x - (s_1,s_2,0)) ds_1 ds_2,
\end{align*}
which is an inverse for the $\bar \p$-operator,
$N:= (\partial_{y_1} + i\partial_{y_2})/2$
(see e.g. \cite{Horm_book_2} Theorem 1.2.2).
We will need the following straightforward continuity result for the Cauchy
operator.

\newtheorem{cauchyop}[thm]{Lemma}
\begin{cauchyop} \label{cauchyop} 
Let $r>0$ and $f \in W^{k,\infty}(\R^3)$, $k\geq0$ and assume that $\supp(f) \subset
B(0,r)$. Then 
\begin{align*}
\|N^{-1}f\|_{W^{k,\infty}(\R^3)} \leq C_k\|f\|_{W^{k,\infty}(\R^3)} 
\end{align*}
for some constant $C_k>0$.
If $f \in C_0(\R^3)$, then $N^{-1}f \in C(\R^3)$.
\end{cauchyop}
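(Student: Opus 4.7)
The plan is to view $N^{-1}$ as a partial convolution in the first two variables with the locally integrable Cauchy kernel $K(s_1,s_2) = \frac{1}{\pi(s_1+is_2)}$, exploiting the compact support of $f$ to cut the integration down to a bounded region where the local $L^1$ nature of $K$ on $\R^2$ yields uniform control. I would start with the base case $k=0$. After the change of variables $y = x - (s_1,s_2,0)$, the integral reads
\[
N^{-1}f(x) = \int_{\R^2} K(x_1-y_1,x_2-y_2)\,f(y_1,y_2,x_3)\,dy_1 dy_2,
\]
and the support condition forces the integrand to vanish outside the disk $\{y_1^2+y_2^2\leq r^2\}$ in the $(y_1,y_2)$-plane (and additionally requires $|x_3|\leq r$). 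Bounding $|f|$ by $\|f\|_\infty$, the estimate reduces to a uniform bound of the form
\[
\int_{\{y_1^2 + y_2^2 \leq r^2\}} \frac{dy_1 dy_2}{|(x_1-y_1) + i(x_2-y_2)|} \leq C(r),
\]
valid for every $x$; this follows by enlarging the integration region to the disk of radius $2r$ about $(x_1,x_2)$ and switching to polar coordinates. This gives $\|N^{-1}f\|_{L^\infty} \leq C_0 \|f\|_{L^\infty}$.

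For $k \geq 1$, the plan is to move derivatives onto $f$. Since $K$ does not depend on $x$, I expect the identity $\partial^\alpha N^{-1}f = N^{-1}(\partial^\alpha f)$ for $|\alpha|\leq k$. I would establish this as a distributional identity by a Fubini interchange (justified by the compact support of $f$ together with the local integrability of $K$ on $\R^2$) and then apply the $k=0$ bound to $\partial^\alpha f \in L^\infty$, which carries the same compact support. Iterating over all $|\alpha|\leq k$ yields the full $W^{k,\infty}$ estimate.

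Continuity of $N^{-1}f$ for $f \in C_0(\R^3)$ would follow by a routine dominated-convergence argument: for $x$ in any fixed compact set, the integrand $K(s_1,s_2) f(x-(s_1,s_2,0))$ is continuous in $x$ for each $s$ and is uniformly dominated by $\|f\|_\infty |K(s_1,s_2)|\chi_E(s)$ for a bounded $E \subset \R^2$, which is integrable. The only mildly delicate point in the whole proof is the commutation of derivatives and integration in the second step; Fubini handles it cleanly, but a robust alternative is to mollify $f$, apply the already-established $k=0$ bound to the smooth approximants, and pass to the limit in the $W^{k,\infty}$ bound.
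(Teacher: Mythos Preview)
The paper does not give a proof of this lemma; it simply cites \cite{MS3}. Your self-contained argument is the standard one and is correct in outline: local integrability of the Cauchy kernel in $\R^2$ combined with compact support of $f$ gives the $L^\infty$ bound, derivatives commute with the (partial) convolution, and dominated convergence handles the continuity claim.

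One small slip: your enlargement step ``the disk of radius $2r$ about $(x_1,x_2)$'' only contains the original integration region $\{y_1^2+y_2^2\le r^2\}$ when $|(x_1,x_2)|\le r$; for general $x$ the containing disk has radius $|(x_1,x_2)|+r$, and the resulting polar-coordinate bound grows with $|x|$. The fix is immediate: split into the cases $|(x_1,x_2)|\le 2r$ (enlarge to the disk of radius $3r$ about $(x_1,x_2)$, giving the bound $6\pi r$) and $|(x_1,x_2)|>2r$ (the integrand is pointwise at most $1/r$, giving the bound $\pi r$). With this correction your argument goes through.
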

\begin{proof} See e.g. \cite{MS3}.
\end{proof}

Returning to \eqref{eq_dbar} we get that
$\Phi = \frac{1}{2}N^{-1} (-i \zeta_0 \cdot (A^\sharp \circ T^{-1})) \circ T$. More explicitly  
we have
\begin{align} 
\label{eq_Phi}
\Phi(x,\zeta_0;\epsilon)
&=
\frac{1}{2\pi} \int_{\R^2} 
\frac{-i \zeta_0 \cdot A^\sharp(x-T^{-1}(s_1,s_2,0))}{s_1+is_2}ds_1ds_2,
\end{align}
where $T^{-1}(s_1,s_2,0)=s_1\alpha+s_2\beta$.
We have thus found a solution $a = e^\Phi$ to equation \eqref{eq_az0eq}.
We will choose $\epsilon$ so that it depends on $h$, which implies that $a$ will
depend on $h$.
In order to determine how the norm of $r$ will depend on $h$ and also for later
estimates, we will need to see how $\|\p^\alpha a\|_{L^\infty}$ depends on $h$. 
Lemma \ref{cauchyop} and estimate \eqref{eq_flat_est} imply the following result.

\begin{lem} Equation \eqref{eq_az0eq} has a solution $a\in
C^\infty(\overline{\Omega})$ satisfying the estimates
\begin{align}
\label{eq_ampl_est}
\|\p^\alpha a\|_{L^\infty(\Omega)}\le C_\alpha \epsilon^{-|\alpha|}\quad 
\textrm{for all}\quad \alpha. 
\end{align}
\end{lem}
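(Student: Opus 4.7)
The construction above already produces the candidate $a = e^{\Phi}$ with $\Phi$ given by the explicit formula \eqref{eq_Phi}, and by the derivation leading to \eqref{eq_dbar} this $a$ satisfies \eqref{eq_az0eq}. So the real content of the lemma is the regularity assertion $a\in C^\infty(\overline\Omega)$ together with the quantitative estimate \eqref{eq_ampl_est}. My plan is to reduce everything to estimates on $\Phi$, apply Lemma \ref{cauchyop} to pass from $A^\sharp$ to $\Phi$, and then use Fa\`a di Bruno to pass from $\Phi$ to $a=e^\Phi$.

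First I would record the input estimates. Since $A^\sharp = A\ast\psi_\epsilon$ is a mollification of a compactly supported Lipschitz vector field, $A^\sharp\in C_0^\infty(\tilde\Omega,\R^3)$, we have $\|A^\sharp\|_{L^\infty}\le\|A\|_{L^\infty}$, and by the standard mollification estimates $\|\partial^\alpha A^\sharp\|_{L^\infty}=\mathcal{O}(\epsilon^{-|\alpha|})$ for every multi-index $\alpha$. The function $-i\zeta_0\cdot(A^\sharp\circ T^{-1})$ therefore lies in $C_0^\infty(\R^3)$ with the same scaling in $\epsilon$, since $T$ is an orthogonal transformation. Applying Lemma \ref{cauchyop} in $W^{k,\infty}$ for every $k\ge 0$ then gives $\Phi\in C^\infty(\R^3)$ (hence in $C^\infty(\overline\Omega)$) together with the estimates
\begin{align*}
\|\Phi\|_{L^\infty(\Omega)}\le C\|A\|_{L^\infty(\Omega)},\qquad
\|\partial^\alpha\Phi\|_{L^\infty(\Omega)}\le C_\alpha \epsilon^{-|\alpha|}\quad(|\alpha|\ge 1).
\end{align*}

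Now I would pass to $a=e^\Phi$. Smoothness is automatic from $\Phi\in C^\infty(\overline\Omega)$. For the derivatives, Fa\`a di Bruno's formula expresses $\partial^\alpha e^\Phi$ as $e^\Phi$ times a finite sum of products of the form $\prod_{j}\partial^{\beta_j}\Phi$, where the multi-indices $\beta_j$ are nonzero and satisfy $\sum_j\beta_j=\alpha$. Taking $L^\infty$ norms and using the estimates above yields
\begin{align*}
\|\partial^\alpha a\|_{L^\infty(\Omega)}
\le e^{\|\Phi\|_{L^\infty}}\sum C_{\beta_1,\dots,\beta_m}\prod_{j=1}^m \epsilon^{-|\beta_j|}
\le C_\alpha\epsilon^{-|\alpha|},
\end{align*}
since each term in the sum carries $\epsilon^{-\sum|\beta_j|}=\epsilon^{-|\alpha|}$ and since $e^{\|\Phi\|_{L^\infty}}$ is bounded by a constant depending only on $\|A\|_{L^\infty}$, independently of $\epsilon$ (and hence of $h$). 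This is exactly \eqref{eq_ampl_est}.

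The only point that requires any care is confirming that $\|\Phi\|_{L^\infty}$ does not degenerate as $\epsilon\to 0$; this is essential because otherwise the factor $e^{\|\Phi\|_{L^\infty}}$ would blow up and ruin the bound for $|\alpha|=0$. This is however guaranteed by the $k=0$ case of Lemma \ref{cauchyop} together with the uniform bound $\|A^\sharp\|_{L^\infty}\le\|A\|_{L^\infty}$, which does not involve $\epsilon$ at all. Everything else is a straightforward combination of the Cauchy-operator continuity and the standard mollifier estimates.
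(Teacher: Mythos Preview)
Your proof is correct and follows exactly the approach the paper indicates: the paper simply states that Lemma \ref{cauchyop} together with the mollifier estimates \eqref{eq_flat_est} yield the result, and you have spelled out precisely this argument, including the Fa\`a di Bruno step needed to pass from the bounds on $\partial^\alpha\Phi$ to those on $\partial^\alpha e^\Phi$.
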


We can now write the $L^\infty(\Omega)$ norm of \eqref{eq_La} as
\begin{align*}
\|L_\zeta a\|_{L^\infty(\Omega)}= 
\|-h^2 L_{A,q}a +2ih\zeta_0\cdot A^\flat a+2\zeta_1\cdot h\nabla 
a+2ih\zeta_1\cdot A a\|_{L^\infty(\Omega)}.
\end{align*}
Using \eqref{eq_flat_est}, \eqref{eq_ampl_est} and the fact that 
$\zeta_1=\mathcal{O}(h)$ we have that 
\begin{align*}
\|L_\zeta a\|_{L^\infty(\Omega)} = \mathcal{O}(h^2\epsilon^{-2}+h\epsilon).
\end{align*}
Choosing $\epsilon = h^{1/3}$, gives finally $\|L_\zeta a\|_{L^\infty(\Omega)} =
\mathcal{O}(h^{4/3})$, as $h \to 0$.

Finally to solve \eqref{eq_split} for $r$, we rewrite it as  
\begin{align} \label{eq_req}
e^{-x\cdot \Re \zeta/h} h^2L_{A,q} e^{x\cdot \Re \zeta/h}(e^{ix\cdot \Im \zeta/h}r)
= -e^{ix\cdot \Im \zeta/h}L_\zeta a.
\end{align}
If we replace $e^{ix\cdot \Im \zeta/h}r$ by $\tilde r$, then
the solvability result Proposition \ref{prop_solvability}, shows that we can find a solution $\tilde r$,
so that a solution $r$ to \eqref{eq_req} is given by $r = e^{-ix\cdot \Im \zeta/h} \tilde r$.

To get a norm estimate for $r$, notice that for the right hand side of \eqref{eq_req}
we have
\begin{align*}
\|e^{ix\cdot \Im \zeta/h}L_\zeta a\|_{L^\infty(\Omega)}  
= \mathcal{O}(h^{4/3}),
\end{align*}
as $h \to 0$. The solvability result \ref{prop_solvability} gives then that
\begin{align*}
\| \tilde r \|_{H^1_{\emph{\textrm{scl}}}(\Omega)}
= \mathcal{O}(h^{1/3}),
\end{align*}
as $h \to 0$, which implies that $\|r\|_{H^1_{\emph{\textrm{scl}}}(\Omega)}  = \mathcal{O}(h^{1/3})$,
as $h \to 0$.

Thus we have obtained the following existence result for complex geometric optics
solutions.

\begin{prop}
\label{prop_CGO_Lip}
Let $A\in W^{1,\infty}(\Omega,\R^3)$ and $q\in L^\infty(\Omega,\C)$. Then for 
$h>0$ small enough, there exist solutions $u\in H^1(\Omega)$,
of the equation 
\[
L_{A,q}u=0\quad \textrm{in}\quad \Omega, 
\]
that are of the form 
\[
u(x,\zeta;h)=e^{x\cdot\zeta/h}(a(x,\zeta;h)+r(x,\zeta;h)),
\]
where $\zeta\in \C^3$, is of the form given by
\eqref{eq_zassum}, $a\in C^\infty(\overline{\Omega})$ solves the
equation \eqref{eq_az0eq},  and where $a$ and $r$ satisfy the estimates
\[
\|\p^\alpha a\|_{L^\infty(\Omega)}\le C_\alpha h^{-|\alpha|/3} 
\quad \text{and} \quad
\|r\|_{H^1_{\emph{\textrm{scl}}}(\Omega)}=\mathcal{O}(h^{1/3}). 
\]
\end{prop}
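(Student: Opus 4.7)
The plan is to consolidate the preceding discussion into a clean existence argument, following the standard Carleman-estimate route. I would begin by inserting the ansatz $u=e^{x\cdot\zeta/h}(a+r)$ into $L_{A,q}u=0$ and observing that this is equivalent to the equation $L_\zeta r=-L_\zeta a$, with $L_\zeta=e^{-x\cdot\zeta/h}h^2 L_{A,q}e^{x\cdot\zeta/h}$ as defined before. The strategy is then to exhibit a good amplitude $a$ first, so that the right-hand side is small, and then to produce $r$ with a quantitative bound using Proposition \ref{prop_solvability}.

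For the amplitude, I would set $a=e^\Phi$ with $\Phi$ solving the transport equation $\zeta_0\cdot\nabla\Phi=-i\zeta_0\cdot A^\sharp$, where $A^\sharp=A\ast\psi_\epsilon$ is the mollification introduced above. Performing the rotation $y=Tx$ to the orthonormal basis $\{\alpha,\beta,\gamma\}$ converts this into the $\bar\partial$-equation \eqref{eq_dbar}, which I would invert using the Cauchy operator $N^{-1}$, yielding the explicit formula \eqref{eq_Phi}. Since $A^\sharp\in C_0^\infty(\tilde\Omega,\R^3)$ with compact support, Lemma \ref{cauchyop} together with the standard mollifier estimate $\|\partial^\alpha A^\sharp\|_{L^\infty}=\mathcal{O}(\epsilon^{-|\alpha|})$ gives $\|\partial^\alpha\Phi\|_{L^\infty(\Omega)}\le C_\alpha\epsilon^{-|\alpha|}$, which by the chain rule transfers to $a=e^\Phi$.

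Next I would compute $L_\zeta a$ using the expansion \eqref{eq_La}. The design of $a$ cancels the term $-2i\zeta_0\cdot A^\sharp a$ against $-2i(-i\zeta_0)\cdot h\nabla a$ to leading order; what remains is controlled via \eqref{eq_flat_est}, the amplitude estimate \eqref{eq_ampl_est}, and $\zeta_1=\mathcal{O}(h)$, yielding $\|L_\zeta a\|_{L^\infty(\Omega)}=\mathcal{O}(h^2\epsilon^{-2}+h\epsilon)$. Balancing these two error terms fixes the choice $\epsilon=h^{1/3}$, which simultaneously produces the required bound $\|\partial^\alpha a\|_{L^\infty(\Omega)}\le C_\alpha h^{-|\alpha|/3}$ and the right-hand side bound $\|L_\zeta a\|_{L^\infty(\Omega)}=\mathcal{O}(h^{4/3})$.

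Finally, I would produce $r$. The subtle point here is that Proposition \ref{prop_solvability} applies to the conjugation by the \emph{real} phase $\varphi=\alpha\cdot x=\operatorname{Re}\zeta\cdot x$ (modulo the $\mathcal{O}(h)$ perturbation from $\zeta_1$), so I would rewrite \eqref{eq_split} in the form \eqref{eq_req}, substitute $\tilde r=e^{ix\cdot\operatorname{Im}\zeta/h}r$, and apply Proposition \ref{prop_solvability} to the $L^2$ right-hand side bounded by $|\Omega|^{1/2}\mathcal{O}(h^{4/3})$. This yields $\|\tilde r\|_{H^1_{\mathrm{scl}}(\Omega)}=\mathcal{O}(h^{1/3})$, and since multiplication by the unimodular factor $e^{-ix\cdot\operatorname{Im}\zeta/h}$ preserves the $H^1_{\mathrm{scl}}$-norm up to a constant (the extra $h\nabla$ produces a bounded factor times $\operatorname{Im}\zeta/h\cdot h=\mathcal{O}(1)$), we obtain $\|r\|_{H^1_{\mathrm{scl}}(\Omega)}=\mathcal{O}(h^{1/3})$, completing the proof. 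The main obstacle is really the bookkeeping in this last step and the balancing of $\epsilon$ against $h$; once $\epsilon=h^{1/3}$ is chosen, everything else follows mechanically from the results already at hand.
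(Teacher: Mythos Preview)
Your proposal is correct and follows essentially the same route as the paper: the same ansatz, the same transport equation for $\Phi$ solved via the Cauchy operator, the same balancing $\epsilon=h^{1/3}$, and the same reduction to Proposition \ref{prop_solvability} via the substitution $\tilde r=e^{ix\cdot\operatorname{Im}\zeta/h}r$. Your explicit remark that the unimodular factor preserves the $H^1_{\mathrm{scl}}$-norm up to a constant is a small clarification the paper leaves implicit.
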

\begin{flushright}$\Box$
\end{flushright}

\begin{rem} 
\label{rem_com_geom_1} In the sequel, we  need complex geometric optics 
solutions belonging to $H^{2}(\Omega)$. To obtain such solutions, let 
$\Omega'\supset\supset\Omega$ be a bounded domain with smooth boundary,  and 
let us extend $A\in W^{1,\infty}(\Omega,\R^3)$ and $q\in L^\infty(\Omega)$ to 
$W^{1,\infty}(\Omega',\R^3)$ and $L^\infty(\Omega')$-functions, respectively. 
By elliptic regularity, the complex geometric optics solutions, constructed on 
$\Omega'$, according to Proposition \emph{\ref{prop_CGO_Lip}},   belong to  
$H^{2}(\Omega)$.

\end{rem}

\begin{rem} \label{rem_CGO_Lip}
Recall that $\Phi = \frac{1}{2}N^{-1} (-i (\alpha+i\beta) \cdot (A^\sharp \circ T^{-1})) \circ T$. Lemma \ref{cauchyop} implies that
$N^{-1}:C_0(\Omega)\to C(\Omega)$ is continuous. The estimates \eqref{eq_flat_est}
show that $A^\sharp \to A$ uniformly on $\Omega$. It follows that, if we define 
$\Phi^0:=\frac{1}{2}N^{-1} (-i (\alpha+i\beta) \cdot (A \circ T^{-1})) \circ T$, 
then $\Phi^0$ solves the equation
\begin{align}
\label{eq_Phi0}
\zeta_0 \cdot \nabla \Phi^0 =-i \zeta_0 \cdot A \quad \textrm{in}\quad \Omega, 
\end{align}
and satisfies
\[
\|\Phi(x,\zeta_0;h^{1/3})- \Phi^0 \|_{L^\infty(\Omega)}\to 0, \quad h\to 0.
\]
\end{rem}

\begin{rem}  \label{rem_g}
We shall later use a slightly more general form for the amplitude $a$ 
in the complex geometric optics solutions. Namely we suppose that $a =
g e^{\Phi}$, where $g \in C^{\infty}(\ov{\Omega})$, is such that
\begin{align}
\zeta_0 \cdot \nabla g =0. \label{eq:gcond}
\end{align}
This means that $g$ is holomorphic in a plane spanned by $\alpha$ and
$\beta$.
Notice also that by picking $a=ge^\Phi$, we get by \eqref{eq_az0eq} that
\begin{align*}
 \zeta_0 \cdot g\nabla \Phi = -i \zeta_0 \cdot gA^\sharp,
\end{align*}
in place of \eqref{eq_trans1}. But the $\Phi$ solving \eqref{eq_trans1}
also solves the above equation. Hence we can use the same argument to obtain the $\Phi$
for the above equation, as earlier.

We thus obtain CGO solutions of the form
\begin{align*}
u = e^{x\cdot \zeta/h}(ge^{\Phi} + r_g),
\end{align*}
where $\Phi$ solves \eqref{eq_trans1}.

Notice also that setting $a=ge^{\Phi}$ does not affect the norm estimates on $a$
in Proposition \ref{prop_CGO_Lip}, since $g$ does not depend on $h$.
\end{rem}

\section{An integral identity}
\label{sec:IA}

One central step in the ideas that are used in proving uniqueness results for 
inverse boundary value problems, is to derive an integral equation that
expresses $L^2$
orthogonality between the product of two solutions $u_1$ and $u_2$,  and  the 
difference of two
potentials $q_1$ and $q_2$, see \cite{Uhl_review_1999}. One shows that
\begin{align*}
\int (q_1-q_2)u_1 u_2=0,
\end{align*}
provided that the DN-maps for $q_1$ and $q_2$ are equal.

A similar thing will be done in this subsection, for the magnetic case. The
integral equation, is however more involved in the case of a magnetic potential 
and will not by
itself be interpreted as an orthogonality relation. 
We will be considering the integral equation in conjunction with solutions that
depend on a small positive parameter $h$. In the later sections we will see 
that 
in the limit $h \nuoli 0$, we obtain a criterion for the curl being zero. 

It will be convenient to set
\[
    l:=\p \HS \cap \ov{N}, 
\]
Recall that we assume that 
\[
(\p\HS  \setminus \ov{N}) \cap \Gamma_j \neq \emptyset, \quad j=1,2.
\]
We can thus choose $\tilde \Gamma_j$, such that 
\begin{align*}
\tilde \Gamma_j\subset\Gamma_j,\quad \tilde \Gamma_j \subset \subset 
\p\HS \setminus \ov{N} ,\quad j=1,2. 
\end{align*}
Then it follows from \eqref{eq_data_inv} that 
\begin{equation}
\label{eq_data_inv_1}
\Lambda_{A_1,q_1}(f)|_{\tilde \Gamma_1}=\Lambda_{A_2,q_2}(f)|_{\tilde \Gamma_1},
\end{equation}
for any $f\in H^{3/2}(\p \HS)$, $\supp(f)\subset \tilde \Gamma_2$.  In order 
to prove Theorem \ref{thm_2_inverse} we shall only  use the data 
\eqref{eq_data_inv_1}, which turns out to be enough to determine the magnetic 
field and the electric potential. 

We now begin deriving the integral identity. We 
assume that $A_j,q_j$ and $\Gamma_j$ 
are as in Theorem \ref{thm_2_inverse} so that \eqref{eq_data_inv_1} also applies.

Let $u_1\in 
H^2_{\textrm{loc}}(\overline{\HS})$ be the radiating solution to 
\begin{align*}
(L_{A_1,q_1} -k^2) u_1 &= 0 \text{ in $\HS$},  \\
u_1|_{\partial \HS} &= f, 
\end{align*}
with $f \in H^{3/2}(\p \HS)$, $\supp(f)\subset\tilde \Gamma_2$. Let $v\in 
H^2_{\textrm{loc}}(\overline{\HS})$ be the radiating solution to 
\begin{align*}
(L_{A_2,q_2} -k^2) v &= 0 \text{ in $\HS$}, \\
v|_{\partial \HS} &= f.
\end{align*}
Define $w:=v-u_1$. Then
\begin{align}
(L_{A_2,q_2} -k^2) w
&= 2i(A_2-A_1)\cdot \nabla u_1 + i \nabla \cdot(A_2-A_1)u_1 \nonumber \\
& \quad +(A_1^2-A_2^2)u_1 + (q_1-q_2)u_1. \label{eq:aoI}
\end{align}
It follows from \eqref{eq_data_inv_1} that
\begin{align}
\label{eq_sec3_1}
(\partial_n + i A_1\cdot n) u_1|_{\tilde \Gamma_1} = (\partial_n + i A_2\cdot 
n)v|_{\tilde \Gamma_1}. 
\end{align}

By Lemma \ref{gauge_inv} we may and shall assume that 
$A_1\cdot n=A_2\cdot n=0$ on $\p\HS$, so that $\p_n w=0$ on 
$\tilde \Gamma_1$.  
We also conclude from \eqref{eq:aoI} that $w$ satisfies the Helmholtz equation
\[
(-\Delta-k^2)w=0\quad \textrm{in}\quad \HS\setminus\ov{N}. 
\]
As $w|_{\tilde \Gamma_1}=\p_n w|_{\tilde \Gamma_1}=0$, by unique continuation, we get that 
$w=0$ in $\HS \setminus \ov{N}$
(see Theorem \ref{UCP} and 
Corollary \ref{UCP_boundary} in the appendix).
Since $w\in H^2_{\textrm{loc}}(\overline{\HS})$, we have 
\[
w=\p_n w=0\quad\textrm{on} \quad \p N \cap \HS.
\]
Let $u_2\in H^2( N)$ be a solution to $(L_{A_2,\overline{q_2}}-k^2)u_2 = 0$ 
in $N$.
Then by Green's formula, we get 
\begin{align*}
((L_{A_2,q_2}-k^2) w, u_2)_{L^2(N)} 
&= (w , (L_{A_2,\overline{q_2}}-k^2) u_2)_{L^2(N)} \\
&\quad - ((\partial_n + iA_2\cdot n) w, u_2)_{L^2(\partial N)} \\
&\quad + (w , (\partial_n + iA_2\cdot n) u_2)_{L^2(\partial N)} \\
&= -(\partial_n w, u_2)_{L^2(l)}.
\end{align*}
Assuming that 
\[
u_2=0\quad \textrm{on}\quad l,
\]
we conclude that 
\[
((L_{A_2,q_2}-k^2) w, u_2)_{L^2(N)}=0. 
\]
Using equation \eqref{eq:aoI} we may write this as follows,  
\begin{align*}
\int_{N} (2i(A_2-A_1)\cdot (\nabla u_1)\ov{u_2} + i \nabla \cdot(A_2-A_1)u_1 
\ov{u_2})\,dx\\
+ \int_{N}(A_1^2-A_2^2 + q_1-q_2)u_1\ov{u_2}\,dx= 0.
\end{align*}
Using again the fact that $(A_2-A_1)\cdot n=0$ on $\p N$ and  an integration by 
parts, we get 
\begin{align*}
i \int_{N} \nabla \cdot(A_2-A_1)u_1 \ov{u_2}\,dx
= -i \int_{N} (A_2-A_1) \cdot (\nabla u_1 \overline{u_2} 
+ u_1 \nabla \overline{u_2})dx.
\end{align*}
Thus, we obtain that  
\begin{equation}
\label{eq_sec3_3}
\begin{aligned}
\int_{N}& i(A_2-A_1) \cdot (\nabla u_1 \ov{u_2} - u_1 \nabla 
\ov{u_2})\,dx\\ 
 &+
\int_{N} (A_1^2-A_2^2 + q_1-q_2)u_1\ov{u_2}\,dx = 0, 
\end{aligned}
\end{equation}
where $u_1\in W_1(\HS)$ and $u_2\in W_2^*(N)$. Here
\begin{align*}
W_1(\HS):=\{u\in H^2_{\textrm{loc}}(\overline{\HS})\;:\;  
(L_{A_1,q_1}-k^2)u=0\textrm{ in }\HS, \\
\supp(u_1|_{\p\HS})\subset\tilde \Gamma_2, u\textrm{ radiating}\},
\end{align*}
and
\begin{align*}
W_2^*(N):=\{u\in H^2(N)\; : \; (L_{A_2,\ov{q_2}}-k^2)u=0\textrm{ in } 
 N, u|_{l}=0\}.
\end{align*}

We shall next extend the integral identity \eqref{eq_sec3_3} to a richer class 
of solutions to the magnetic Schr\"odinger equation. To that end, let us 
introduce the following space of solutions,
\[
W_1(N):=\{u\in H^2(N)\; : \; (L_{A_1,q_1}-k^2)u=0\textrm{ in }  N, 
u|_{l}=0\}.
\] 
The following Runge type approximation result is similar to those found in 
\cite{I1}, \cite{LU1} and \cite{KLU1}.

\begin{lem} \label{runge}

The space $V:=W_1(\HS)|_{N}$ is dense in $W_1(N)$ in the 
$L^2(N)$--topology.

\end{lem}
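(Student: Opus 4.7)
The natural strategy is via Hahn--Banach duality: it suffices to show that whenever $f\in L^2(N)$ satisfies $(f,u)_{L^2(N)}=0$ for every $u\in V$, one also has $(f,w)_{L^2(N)}=0$ for every $w\in W_1(N)$. The centerpiece of the approach is to construct an auxiliary function $\phi\in H^2_{\textrm{loc}}(\overline{\HS})$ solving the ``dual'' boundary value problem
\[
(L_{A_1,\ov{q_1}}-k^2)\phi = f\chi_N \text{ in } \HS,\qquad \phi|_{\p\HS}=0,
\]
subject to the \emph{incoming} radiation condition at infinity. I would establish existence by conjugation: using that $\overline{L_{A,q}\phi}=L_{-A,\ov{q}}\ov{\phi}$ for real $A$, it is enough to solve the outgoing half-space Dirichlet problem $(L_{-A_1,q_1}-k^2)\psi=\ov{f}\chi_N$ with $\psi|_{\p\HS}=0$, which is covered by the theory of \cite{Poh1} since the source is compactly supported in $L^2$ and $\Im q_1\le 0$; then $\phi:=\ov{\psi}$ is the sought solution.

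Next I would pair $\phi$ against an arbitrary $u\in W_1(\HS)$ with $u|_{\p\HS}=g$, $\supp(g)\subset\tilde\Gamma_2$, via Green's second formula applied on $\HS\cap B(0,R)$ and then pass to $R\to\infty$. The spherical contribution at $|x|=R$ vanishes in the limit because $\phi$ is incoming while $u$ is outgoing, producing the standard $r^{-2}$ cancellation between the $\pm ik$-terms in the respective Sommerfeld conditions over a sphere of area $\sim R^2$. On $\p\HS$, the gauge $A_1\cdot n=0$ and $\phi|_{\p\HS}=0$ leave only the term $(\p_n\phi,g)_{L^2(\tilde\Gamma_2)}$. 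Since the left-hand side equals $(f,u)_{L^2(N)}=0$ by hypothesis, and $g$ ranges over all $H^{3/2}(\p\HS)$ functions supported in $\tilde\Gamma_2$, this forces $\p_n\phi=0$ on $\tilde\Gamma_2$.

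Because $A_1$ and $q_1$ are supported in $\ov{N}$, the function $\phi$ in fact satisfies $(-\Delta-k^2)\phi=0$ throughout the connected open set $\HS\setminus\ov{N}$, with both Cauchy data vanishing on the relatively open patch $\tilde\Gamma_2\subset\p\HS\setminus\ov{N}$ of its boundary. The boundary unique continuation principle (Corollary \ref{UCP_boundary}) together with connectedness of $\HS\setminus\ov{N}$ then forces $\phi\equiv 0$ throughout $\HS\setminus\ov{N}$, and by the $H^2_{\textrm{loc}}$ trace $\phi=\p_n\phi=0$ on $\p N\cap\HS$. A second application of Green's formula, now in $N$ against an arbitrary $w\in W_1(N)$, yields
\[
(f,w)_{L^2(N)} = (\phi,(L_{A_1,q_1}-k^2)w)_{L^2(N)} + \text{boundary on }\p N,
\]
where the volume integral vanishes since $(L_{A_1,q_1}-k^2)w=0$, the boundary on $\p N\cap\HS$ vanishes because both $\phi$ and $\p_n\phi$ do, and the boundary on $l$ vanishes because $\phi|_l=0$ (inherited from $\p\HS$) and $w|_l=0$ by definition of $W_1(N)$. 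Hence $(f,w)_{L^2(N)}=0$, completing the Hahn--Banach argument.

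The step I anticipate as the main obstacle is the construction and justification of the dual problem for $\phi$ with the incoming radiation condition, and in particular the limiting argument needed to discard the spherical contribution at infinity in the first application of Green's formula for $H^2_{\textrm{loc}}$ functions; by contrast, the boundary unique continuation step and the final Green's identity in $N$ are comparatively routine once $\phi$ has the requisite decay and boundary vanishing.
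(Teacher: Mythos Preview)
Your proposal is correct and follows essentially the same route as the paper's proof: both construct an incoming solution $U$ (your $\phi$) of $(L_{A_1,\ov{q_1}}-k^2)U=g_T$ with zero Dirichlet data, use Green's formula on $\HS$ to conclude $\p_n U|_{\tilde\Gamma_2}=0$, apply unique continuation to kill $U$ on $\HS\setminus\ov{N}$, and finish with Green's formula on $N$. The two technical inputs you flag as the main obstacles are exactly what the paper packages separately---the incoming solvability is cited from \cite{Poh1}, and the half-space Green identity with mixed outgoing/incoming radiation conditions is Lemma~\ref{MagGFII} in the appendix---so your anticipation is accurate; the only cosmetic difference is that the paper phrases the argument by contradiction rather than via the direct Hahn--Banach formulation.
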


\begin{proof}

Suppose that $V$ is not dense in $W_1(N)$.
First notice that $\text{span}(V)=V$ 
so that $\ov{V}$ is a linear subspace of $L^2(N)$. Since
$V$ is not dense in $W_1(N)$, we have a vector  $u_0 \in W_1( N)$ 
such that $u_0 \notin \ov{V}$.
We can decompose  $u_0$ as $u_0=a+b$, where $a \in
\ov{V}$, $b \in \ov{V}^\bot$ and $b \neq 0$.
Let $T$ be the linear functional on $L^2( N)$, defined by 
$T(x):= \text{proj}_{\ov{V}^\bot}(x)/ \|b\|_{L^2}$, 
where $\text{proj}_{\ov{V}^\bot}$ is the orthogonal
projection to $\ov{V}^\bot$. Now clearly
$\|T(u_0)\|_{L^2} = 1$ and $T|_{V} = 0$. 

By the Riesz representation theorem, there is  $g_T \in L^2(N)$ that
corresponds to $T$. Extend $g_T$ by zero to the complement of $N$ 
in $\HS$. 
Let $U\in H^2_{\textrm{loc}}(\overline{\HS})$ be the incoming solution to 
\begin{align*}
(L_{A_1,\ov{q_1}} -k^2) U &= g_T \quad \text{in}\quad \HS,\\
U|_{\partial \HS} &= 0. 
\end{align*}
The existence of such a solution is proved in \cite{Poh1}.

Now let $u \in W_1(\HS)$. Then because $T|_{V}=0$ and $\supp(g_T) \subset 
 N$, we get 
by the Green's formula of Lemma \ref{MagGFII} that
\begin{align*}
0=(u,g_T)_{L^2(\HS)}
&= (u,(L_{A_1,\ov{q_1}}-k^2) U)_{L^2(\HS)} \\
&= ((L_{A_1,q_1}-k^2) u,U)_{L^2(\HS)} \\
&\quad - (u , (\partial_n +iA_1\cdot n) U)_{L^2(\partial \HS)} \\
&\quad + ( (\partial_n + iA_1\cdot n) u, U)_{L^2(\partial \HS)} \\
&= -( u,\partial_n U)_{L^2(\tilde \Gamma_2)}.
\end{align*}
Since the boundary condition $u|_{\tilde \Gamma_2}$ can be chosen  arbitrarily 
from
$C^\infty_0(\tilde \Gamma_2)$,
we get  that $\partial_n U|_{\tilde \Gamma_2} = 0$. Since $ U|_{\tilde 
\Gamma_2} = 0$, we 
apply the unique continuation
principle 
to conclude that  $U|_{\HS \setminus \ov{ N}} = 0$. As $U \in 
H^2_{\textrm{loc}}(\overline{\HS})$, we have 
\[
U|_{\p N \cap \HS}= \p_n U|_{\p N\cap \HS}= 0.
\]

Now applying Green's formula and 
doing the same computation as above for $u_0$ and $ N$ instead of $u$ yields
\begin{align*}
(u_0,g_T)_{L^2( N)}
&= (u_0,(L_{A_1,\ov{q_1}}-k^2) U)_{L^2( N)} \\
&= ((L_{A_1,q_1}-k^2) u_0,U)_{L^2( N)} \\
&\quad - (u_0 , (\partial_n +iA_1\cdot n) U)_{L^2(\partial  N)} \\
&\quad + ( (\partial_n + iA_1\cdot n) u_0, U)_{L^2(\partial  N)} \\
&=-(u_0 , \partial_n U)_{L^2( l)}=0.
\end{align*}
Here we have used that $u_0|_{l}=0$. 
It follows that $T(u_0) = 0$. This contradiction completes the proof.
\end{proof}

Since $(A_2-A_1)\cdot n=0$ on $\p N$,  we can rewrite \eqref{eq_sec3_3} in 
the following form,
\begin{align*}
-\int_{ N}& u_1 i \nabla \cdot((A_2-A_1) \ov{u_2})\,dx  -\int_{ N} i   
(A_2-A_1)\cdot (u_1 \nabla \ov{u_2})\,dx\\ 
 &+
\int_{ N} (A_1^2-A_2^2 + q_1-q_2)u_1\ov{u_2}\,dx = 0.
\end{align*}
Hence, an application of Lemma \ref{runge} implies that the integral identity  
\eqref{eq_sec3_3} is valid for any $u_1\in W_1( N)$ and any $u_2 \in 
W_2^*( N)$.

We summarize the discussion in this subsection in the following result.

\begin{prop} \label{identity} 
Assume that $A_j,q_j$ and $\Gamma_j$, $j=1,2$ 
are as in Theorem \ref{thm_2_inverse} and that the DN-maps satisfy
\begin{align}
\label{eq_ieDN}
\Lambda_{A_1,q_1}(f)|_{\Gamma_1}=\Lambda_{A_2,q_2}(f)|_{\Gamma_1},
\end{align}
for any $f\in H^{3/2}_{\emph{\textrm{comp}}}(\p \HS)$, $\supp(f)\subset \Gamma_2$.
Then 
\begin{equation}
\begin{aligned}\label{eq_sec3_4} 
\int_{ N}& i(A_2-A_1) \cdot (\nabla u_1 \ov{u_2} - u_1 \nabla 
\ov{u_2})\,dx\\ 
 &+
\int_{ N} (A_1^2-A_2^2 + q_1-q_2)u_1\ov{u_2}\,dx = 0, 
\end{aligned}
\end{equation}
for any $u_1\in W_1( N)$ and any $u_2 \in W_2^*( N)$.
\end{prop}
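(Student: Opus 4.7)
The plan is to derive \eqref{eq_sec3_4} first for solutions $u_1$ that extend from $N$ to a radiating solution on the whole half space, and then to use a Runge-type approximation argument (\textbf{Lemma \ref{runge}}) to pass to arbitrary $u_1 \in W_1(N)$. The reduction from $\Gamma_j$ to $\tilde{\Gamma}_j \subset\subset \partial\HS \setminus \overline{N}$ is immediate from \eqref{eq_data_inv} since the DN-map equality restricts.

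First I would take any $f \in H^{3/2}(\partial\HS)$ with $\supp(f) \subset \tilde{\Gamma}_2$ and let $u_1, v$ be the radiating solutions of the Dirichlet problems for $(L_{A_1,q_1}-k^2)$ and $(L_{A_2,q_2}-k^2)$ respectively, with common boundary value $f$. Using Lemma \ref{gauge_inv}(ii), I may assume $A_j \cdot n = 0$ on $\partial\HS$ without changing the DN-maps. Setting $w = v - u_1$, the DN-map condition \eqref{eq_sec3_1} then forces $\partial_n w|_{\tilde{\Gamma}_1} = 0$, while $w|_{\tilde{\Gamma}_1} = 0$ holds trivially since both $u_1$ and $v$ equal $f$ on $\partial\HS$. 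Computing $(L_{A_2,q_2}-k^2)w$ using \eqref{eq:aoI} shows that $w$ satisfies the free Helmholtz equation on $\HS \setminus \overline{N}$ (since both $A_2 - A_1$ and $q_1 - q_2$ are supported in $N$), so by unique continuation (Theorem \ref{UCP} and Corollary \ref{UCP_boundary}) I conclude $w = 0$ in $\HS \setminus \overline{N}$, and in particular $w = \partial_n w = 0$ on $\partial N \cap \HS$.

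Next I would apply Green's formula on $N$ to $w$ and any $u_2 \in W_2^*(N)$. The boundary term on $\partial N \cap \HS$ vanishes by the preceding step, and the boundary term on $l = \partial\HS \cap \overline{N}$ vanishes because $u_2|_l = 0$. This yields $((L_{A_2,q_2}-k^2)w, u_2)_{L^2(N)} = 0$; substituting \eqref{eq:aoI} and integrating by parts the divergence term (using $(A_2-A_1)\cdot n = 0$ on $\partial N$, which holds on $l$ by the gauge choice and on $\partial N \cap \HS$ because $A_j$ is supported inside $N$) produces the identity \eqref{eq_sec3_3} for all $u_1 \in W_1(\HS)|_N$ and $u_2 \in W_2^*(N)$.

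Finally, invoking Lemma \ref{runge}, the space $W_1(\HS)|_N$ is dense in $W_1(N)$ in the $L^2(N)$-topology. The integrand in the rewritten form of \eqref{eq_sec3_3} is linear in $u_1$ with $L^2$-continuous coefficients (after moving one derivative off $u_1$ via the integration by parts displayed before the proposition statement, which uses only that $u_2 \in H^2(N)$, $A_2 - A_1 \in W^{1,\infty}$, and $(A_2-A_1)\cdot n = 0$ on $\partial N$). Hence the identity persists for every $u_1 \in W_1(N)$ with $u_2 \in W_2^*(N)$ fixed, giving \eqref{eq_sec3_4}. The main obstacle is the density in Lemma \ref{runge}, which has already been carried out via a Hahn-Banach argument combined with the existence of incoming solutions and a second application of unique continuation; the present proof merely assembles these ingredients.
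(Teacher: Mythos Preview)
Your proposal is correct and follows essentially the same approach as the paper: set $w=v-u_1$, use the gauge reduction $A_j\cdot n=0$ and the DN-map hypothesis to get vanishing Cauchy data for $w$ on $\tilde\Gamma_1$, apply unique continuation to kill $w$ outside $\overline N$, then use Green's formula on $N$ against $u_2\in W_2^*(N)$ (with the boundary terms on $l$ handled via $w|_l=0$ and $u_2|_l=0$), integrate the divergence term by parts, and finally pass from $W_1(\HS)|_N$ to $W_1(N)$ by the $L^2$-density of Lemma~\ref{runge} after rewriting the identity so that $u_1$ enters only in $L^2$. The paper does exactly this; your outline matches it step for step.
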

\begin{flushright}$\Box$
\end{flushright}

\begin{rem} \label{rem_ie}
Notice that the proof of Proposition \ref{identity} only uses the assumption
\eqref{eq_data_inv_1}, which follows from \eqref{eq_ieDN}. 
Proposition \ref{identity} holds therefore also under the weaker assumption 
\eqref{eq_data_inv_1}.
\end{rem}

\section{Recovering the magnetic field}

The aim of this section is to prove the first part of Theorem \ref{thm_2_inverse}, by
showing that the curl of the magnetic potential is
determined by the DN-map.
We choose an open ball $B$ centered on $\p \HS$ with $N \subset \subset B$.
And use the notations
\[
\Bm:=\HS\cap B,\quad B_+:=\R^3_+\cap B ,\quad l_B:=\p\HS\cap B.
\]
The first step in the argument will be to 
construct complex geometric optics solutions $u_1 \in W_1(N)$ and $u_2  \in W_2^*(N)$ 
and then to examine the limit of \eqref{eq_sec3_4} as $h \to 0$.

For $u_1\in W_1(N)$ and $u_2\in W_2^*(N)$, we have that 
$u_j|_{l}=0$, $j=1,2$. To obtain solutions that satisfy this condition, we will
first choose solutions defined on the larger set $B$ and then use a reflection
argument.

The parameters $\zeta$ for the complex geometric optics
solutions will be picked as follows. We will assume that   
\begin{align}
\label{eq_gamma-assum}
\xi,\gamma_1,\gamma_2\in\R^3,\; |\gamma_1|=|\gamma_2|=1\; 
\text{ and that }\{\gamma_1,\gamma_2,\xi\} \; \text{is orthogonal}. 
\end{align}
Similarly to \cite{S1}, we set 
\begin{align}
\label{eq_zeta_1_2}
\zeta_1&=\frac{ih\xi}{2}+\gamma_1+ i\sqrt{1-h^2\frac{|\xi|^2}{4}}\gamma_2, \\ 
\zeta_2&=-\frac{ih\xi}{2}-\gamma_1+i\sqrt{1-h^2\frac{|\xi|^2}{4}}\gamma_2, \nonumber 
\end{align}
so that $\zeta_j\cdot\zeta_j=0$, $j=1,2$, and 
$(\zeta_1+\overline{\zeta_2})/h=i\xi$. Here $h>0$ is a small 
semiclassical parameter. Notice also that $\zeta_j$, $j=1,2$ satisfy the
conditions on $\zeta$ in \eqref{eq_zassum}, when we take 
$\alpha=\pm\gamma_1$ and $\beta=\gamma_2$.

We need to extend the potentials $A_j$ and $q_j$, $j=1,2$, to  $B_+$.  For
the component functions 
$A_{j,1}$, $A_{j,2}$, and $q_j$, we do an even extension, and  for $A_{j,3}$, we 
do an odd extension, i.e.,  for $j=1,2$ we set,
\begin{align*}
\tilde A_{j,k}(x)&=\begin{cases} A_{j,k}(x),& x_3<0,\\
 A_{j,k}(\tilde x),& x_3>0,
\end{cases},\quad k=1,2,\\
\tilde A_{j,3}(x)&=\begin{cases} A_{j,3}(x),& x_3<0,\\
- A_{j,3}(\tilde x),& x_3>0,
\end{cases}\\
\tilde q_j(x)&=\begin{cases} q_j(x),& x_3<0,\\
 q_j(\tilde x),& x_3>0,
\end{cases}
\end{align*}  
where $\tilde x := (x_1,x_2,-x_3)$.
By our assumptions, $A_{j,3}|_{x_3=0}=0$, from which it follows  
that  $\tilde A_j\in W^{1,\infty}(B)$ and 
$\tilde q_j\in L^{\infty}(B)$, $j=1,2$.

We can now by Proposition \ref{prop_CGO_Lip} and Remark \ref{rem_com_geom_1}
pick complex geometric optics solutions  $\tilde u_1$ in $H^2(B)$,
\[
 \tilde u_1(x,\zeta_1;h)=e^{x\cdot \zeta_1/h} 
(e^{\Phi_1(x,\gamma_1+i\gamma_2;h)}+r_1(x,\zeta_1; h))
\]
of the equation $(L_{\tilde A_1, \tilde q_1}-k^2) \tilde u_1=0$ in $B$, 
where $\Phi_1\in C^{\infty}(\overline{B})$.
By Remark \ref{rem_CGO_Lip}, $\Phi_1 \to \Phi_1^0$ in 
the $L^\infty$-norm as $h\to 0$, where 
$\Phi_1^0$ solves the equation
\begin{align}
\label{eq_Phi10}
(\gamma_1+i\gamma_2) \cdot \nabla \Phi_1^0 = -i(\gamma_1+i\gamma_2) \cdot \tilde A_1 \quad\text{in}\quad B.
\end{align}
To obtain a function that is zero on the plane $x_3=0$, we set
\begin{align}
\label{eq_u_1-cgo}
u_1(x) := \tilde u_1(x)-\tilde u_1(\tilde x),\quad x\in \Bm \cup l_B. 
\end{align}
Then it is easy to check that the restriction $u_1|_N\in W_1(N)$.

We can similarly pick by Proposition \ref{prop_CGO_Lip} and Remark \ref{rem_com_geom_1},
complex geometric optics solutions $\tilde u_2$ in $H^2(B)$,
\[
\tilde u_2(x,\zeta_2;h)=e^{x\cdot \zeta_2/h} 
(e^{\Phi_2(x,-\gamma_1+i\gamma_2;h)}+r_2(x,\zeta_2; h))
\]
of the equation $(L_{\tilde A_2,\overline{\tilde q_2}}-k^2)\tilde u_2=0$ in 
$B$, 
where $\Phi_2\in C^{\infty}(\overline{B})$.
By Remark \ref{rem_CGO_Lip}, $\Phi_2 \to \Phi_2^0$ in 
the $L^\infty$-norm as $h\to 0$, where 
$\Phi_2^0$ solves the equation
\begin{align}
\label{eq_Phi20}
(-\gamma_1+i\gamma_2) \cdot \nabla \Phi_2^0 = -i(-\gamma_1+i\gamma_2) \cdot \tilde A_2 \quad\text{in}\quad B.
\end{align}
To obtain a function that is zero on the plane $x_3=0$, we set
\begin{align}
\label{eq_u_2-cgo}
u_2(x) := \tilde u_2(x)-\tilde u_2(\tilde x),\quad x\in \Bm \cup l_B. 
\end{align}
Then it is easy to check that the restriction $u_2|_N\in W_2^*(N)$.

The next step is to substitute the complex geometric optics solutions $u_1$ and 
$u_2$, given by \eqref{eq_u_1-cgo} and \eqref{eq_u_2-cgo}, respectively,  into 
the integral identity \eqref{eq_sec3_4}. This  will be done in the Lemma bellow.
We will use the abbreviations $P_1(x) := e^{\Phi_1(x)}+r_1(x)$ and $P_2(x) :=
e^{\Phi_2(x)}+r_2(x)$, so that
\begin{align*}
u_1(x)&=e^{x\cdot\zeta_1/h}P_1(x) -e^{\tilde x\cdot\zeta_1/h}P_1(\tilde x), \\
u_2(x)&=e^{x\cdot\zeta_2/h}P_2(x) -e^{\tilde x\cdot\zeta_2/h}P_2(\tilde x).
\end{align*}
For future references, it 
will be convenient to compute the product of the phases that occur in the terms 
$u_1\ov{u}_2, \nabla u_1\ov{u_2}$ and $u_1 \nabla \ov{u_2}$
\begin{equation}
\label{eq_phases}
\begin{aligned}
e^{x\cdot\zeta_1/h}e^{x\cdot\overline{\zeta_2}/h}&=e^{ix\cdot\xi},\quad 
e^{\tilde x\cdot\zeta_1/h}e^{\tilde x\cdot\overline{\zeta_2}/h}=e^{i\tilde x 
\cdot\xi},\\
e^{\tilde x 
\cdot\zeta_1/h}e^{x\cdot\overline{\zeta_2}/h}
&=e^{ix\cdot\xi}e^{i(0,0,-2x_3)\cdot\zeta_1/h}
=e^{ix\cdot  \xi_--2\gamma_{1,3} x_3/h},\\
e^{x\cdot\zeta_1/h}e^{\tilde x\cdot\overline{\zeta_2}/h}
&=e^{i\tilde x\cdot\xi}e^{i(0,0,2x_3)\cdot\zeta_1/h} 
=e^{ix\cdot \xi_++2\gamma_{1,3} x_3/h},
\end{aligned}
\end{equation}
where $\gamma_j=(\gamma_{j,1},\gamma_{j,2},\gamma_{j,3})$, $j=1,2$ and 
\[
\xi_\pm=\bigg(\xi_1,\xi_2,\pm 
\frac{2}{h}\sqrt{1-\frac{h^2|\xi|^2}{4}}\gamma_{2,3}\bigg).
\]
We restrict the choices of $\gamma_1$ and $\gamma_2$, by assuming that
\begin{align} \label{eq_gamma-rest}
\gamma_{1,3}= 0 \quad \text{and} \quad  \gamma_{2,3}\ne 0.
\end{align}
We need these conditions for the proof of the next Lemma. 
The first condition makes the above phases
purely imaginary, which avoids exponential growth of the terms, as $h\to0$ . 
The second condition implies that $|\xi_\pm|\to \infty$ as $h\to 0$.
This will be needed since we will use the Riemann-Lebesgue Lemma 
to eliminate unwanted imaginary exponentials. 

Finally it will also be convenient to explicitly state the following norm estimates, 
which follow from Proposition \ref{prop_CGO_Lip} 
\begin{equation}
\label{eq_rem_amp}
\begin{aligned}
&\|e^{\Phi_j}\|_{L^\infty}=\mathcal{O}(1),\quad \|\nabla 
e^{\Phi_j}\|_{L^\infty}=\mathcal{O}(h^{-1/3}),\\
&\|r_j\|_{L^2}=\mathcal{O}(h^{1/3}),\quad \|\nabla 
r_j\|_{L^2}=\mathcal{O}(h^{-2/3}),\quad j=1,2,
\end{aligned}
\end{equation}
as $h\to0$.

\begin{lem}
\label{inter_prop_32}
We have
\begin{equation}
\label{eq_identity_main_2}
(\gamma_1+i\gamma_2)\cdot\int_{B}(\tilde A_2-\tilde 
A_1)e^{ix\cdot\xi}e^{\Phi_1^0+\ov{\Phi_2^0}}dx=0,
\end{equation}
where $\gamma_1,\gamma_2$ and $\xi$ satisfy \eqref{eq_gamma-assum} and \eqref{eq_gamma-rest}.
\end{lem}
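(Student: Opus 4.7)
The plan is to substitute the reflected complex geometric optics solutions \eqref{eq_u_1-cgo} and \eqref{eq_u_2-cgo} into the integral identity \eqref{eq_sec3_4}, multiply through by $h$, and pass to the limit $h\to 0$. The zeroth-order term $h\int_N(A_1^2-A_2^2+q_1-q_2)u_1\ov{u_2}\,dx$ disappears, since $u_1\ov{u_2}$ is bounded in $L^1(N)$ uniformly in $h$: each of the four phase products in \eqref{eq_phases} has modulus one when $\gamma_{1,3}=0$, and the amplitudes $P_j:=e^{\Phi_j}+r_j$ are bounded in $L^2$ by \eqref{eq_rem_amp}. The rescaled first-order magnetic term is $O(1)$ and supplies the limiting identity.

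Expanding $\nabla u_1\ov{u_2}-u_1\nabla\ov{u_2}$ using \eqref{eq_phases} yields four pieces (native-native, reflected-reflected, and two mixed). Writing $R:=\mathrm{diag}(1,1,-1)$, differentiation of the reflected exponentials via the chain rule replaces the $\zeta_j/h$ prefactor by $R\zeta_j/h$. For the native-native piece the dominant coefficient after rescaling is $\zeta_1-\ov{\zeta_2}\to 2(\gamma_1+i\gamma_2)$, while $P_1\ov{P_2}\to e^{\Phi_1^0+\ov{\Phi_2^0}}$ in $L^1$ by Remark \ref{rem_CGO_Lip}; the residual $h\nabla P_j$ terms are $O(h^{1/3})$ in $L^1$ via \eqref{eq_rem_amp}. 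The reflected-reflected piece behaves analogously with $R(\zeta_1-\ov{\zeta_2})$ in place of $\zeta_1-\ov{\zeta_2}$ and arguments evaluated at $\tilde x$. The two mixed pieces carry phases $e^{ix\cdot\xi_\pm\pm 2\gamma_{1,3}x_3/h}$; the restriction $\gamma_{1,3}=0$ suppresses the exponential growth, and $\gamma_{2,3}\ne 0$ forces $|\xi_\pm|\to\infty$, so the Riemann--Lebesgue lemma annihilates them (applied after extracting the fixed $L^1$-limit of the slowly varying prefactor).

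To merge the surviving native and reflected contributions, I change variables $x\mapsto\tilde x$ in the reflected-reflected integral, taking it from $N$ onto $\tilde N$. Using $R^T=R$ together with the even/odd extensions defining $\tilde A_j$ (consistent with $A_{j,3}|_{x_3=0}=0$, which is available by Lemma \ref{gauge_inv}), one checks that for $x\in\tilde N$,
\[
(A_2-A_1)(\tilde x)\cdot R(\gamma_1+i\gamma_2)=(\tilde A_2-\tilde A_1)(x)\cdot(\gamma_1+i\gamma_2).
\]
The native and reflected contributions thus combine into $2i(\gamma_1+i\gamma_2)\cdot\int_{N\cup\tilde N}(\tilde A_2-\tilde A_1)e^{ix\cdot\xi}e^{\Phi_1^0+\ov{\Phi_2^0}}\,dx$, and extending the domain of integration to $B$ is free because $\tilde A_2-\tilde A_1$ is supported in $N\cup\tilde N$. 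Dividing by $2i$ gives \eqref{eq_identity_main_2}.

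The main obstacle is the reflection bookkeeping: the interplay between the gauge condition $A_{j,3}|_{x_3=0}=0$, the odd extension of $\tilde A_{j,3}$, and the choice $\gamma_{1,3}=0$ is precisely what is needed both to suppress exponential growth in the mixed phases and to make the native and reflected terms recombine cleanly into an integral of the extended potential $\tilde A_2-\tilde A_1$. A secondary subtlety is invoking Riemann--Lebesgue for integrands that themselves depend on $h$, which is handled by first passing the slowly varying factor to its fixed $L^1$-limit and then applying the lemma to the resulting oscillatory integral.
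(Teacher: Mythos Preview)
Your proposal is correct and follows essentially the same route as the paper's own proof: insert the reflected CGO solutions into \eqref{eq_sec3_4}, multiply by $h$, use the phase identities \eqref{eq_phases} together with $\gamma_{1,3}=0$ to keep all terms bounded, discard the mixed terms via Riemann--Lebesgue (since $\gamma_{2,3}\neq 0$ forces $|\xi_\pm|\to\infty$), and then merge the native and reflected survivors into a single integral over $B$ via the change of variables $x\mapsto\tilde x$ and the parity structure of $\tilde A_j$. Your organization (treating $\nabla u_1\ov{u_2}-u_1\nabla\ov{u_2}$ at once and being explicit about passing the slowly varying factor to its $L^1$ limit before invoking Riemann--Lebesgue) is a mild streamlining of the paper's argument, but the substance is the same.
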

\begin{proof} We will prove the statement by
multiplying the integral equation \eqref{eq_sec3_4} of Proposition \ref{identity} by $h$,
when $u_1$ and $u_2$ are given by \eqref{eq_u_1-cgo} and \eqref{eq_u_2-cgo}, and
then take the limit as $h \to 0$.

To begin with notice that we may integrate over $\Bm$ in \eqref{eq_sec3_4},
since 
\[
    \supp(A_j),\supp(q_j) \subset N \subset \ov{\Bm}
\]
and $u_j$ are defined in $B$, when $j=1,2$.
We first show that for the second term in \eqref{eq_sec3_4} we have 
\begin{align}\label{eq_ua1}
h \int_{\Bm} (A_1^2-A_2^2 + q_1-q_2)u_1\ov{u}_2 dx \to 0,
\end{align}
as $h \to 0$.
Using the phase computations \eqref{eq_phases} we get that
\begin{align*}
u_1 \ov{u}_2 = 
&\quad e^{i\xi \cdot x} P_1(x) \ov{P}_2(x) 
- e^{ix \cdot \xi_+} P_1(x) \ov{P}_2(\tilde x) \\
&- e^{ix \cdot \xi_-}P_1(\tilde x) \ov{P}_2(x) 
+ e^{i\xi \cdot \tilde x} P_1(\tilde x) \ov{P}_2(\tilde x).
\end{align*}
This is multiplied by an $L^\infty$ function in \eqref{eq_ua1}. Since we
restricted the choice of $\gamma_1$ to make the exponents purely imaginary, we
see easily using the estimates \eqref{eq_rem_amp} that \eqref{eq_ua1} holds.

Equation \eqref{eq_sec3_4} multiplied by $h$, is thus reduced in the limit to
\begin{align}
\label{eq_ua2}
\lim_{h \to 0} \bigg(
h \int_{\Bm} i(A_2-A_1)\cdot\nabla u_1 \ov{u}_2 dx 
- h \int_{\Bm} i(A_2-A_1)\cdot u_1 \nabla \ov{u}_2  dx \bigg)=0.
\end{align}
We will proceed by examining the first term.
Using \eqref{eq_phases} we  write $\nabla u_1 \ov{u}_2$ as
\begin{align*}
\nabla u_1 \ov{u}_2 = 
&\frac{\zeta_1}{h} \big( 
e^{ix \cdot \xi} P_1(x) \ov{P_2(x)}-  e^{ix \cdot \xi_+} P_1(x) \ov{P_2(\tilde x)} \big)  \\
&+e^{ix \cdot \xi} \nabla P_1(x) \ov{P_2(x)}-  e^{ix \cdot \xi_+} \nabla P_1(x) \ov{P_2(\tilde x)} \\
&-\frac{\tilde \zeta_1}{h} \big( 
e^{ix \cdot \xi_-}  P_1(\tilde x) \ov{P_2(x)} - e^{i\tilde x \cdot \xi} P_1(\tilde x) 
\ov{P_2(\tilde x)} \big)  \\
&-e^{ix \cdot \xi_-} \nabla_x P_1(\tilde x) \ov{P_2(x)} + e^{i\tilde x \cdot
\xi} \nabla_x P_1( \tilde x) 
\ov{P_2(\tilde x)},
\end{align*}
where $\tilde \zeta_j := (\zeta_{j,1},\zeta_{j,2},-\zeta_{j,3})$, $j=1,2$.
The terms of the product that do not contain the factor $1/h$, result in
integrals similar
to the one in \eqref{eq_ua1}. One sees similarly using estimates  \eqref{eq_rem_amp} that they are 
zero in the limit of \eqref{eq_ua2}. 
The first term inside the limit in \eqref{eq_ua2} is therefore reduced to 
\begin{align*}
\lim_{h \to 0} 
\int_{\Bm} i(A_2-A_1)\cdot \big( 
&\zeta_1 e^{ix \cdot \xi} P_1(x) \ov{P_2(x)}   - \tilde \zeta_1 e^{ix \cdot \xi_-} P_1(\tilde x) \ov{P_2(x)} \\
-&\zeta_1  e^{ix \cdot \xi_+} P_1(x)\ov{P_2(\tilde x)}  + \tilde \zeta_1
e^{i\tilde x \cdot \xi} P_1(\tilde x) \ov{P_2(\tilde x)}  \big) dx.
\end{align*}
Now we use the Riemann-Lebesgue Lemma to conclude that the terms with exponents
containing  $\xi_+$ and $\xi_-$ are zero in the limit. 
To see this, notice that by Remark \ref{rem_CGO_Lip}, we see
that $\|\Phi_i\|_{L^\infty(\Bm)} < C$, for some $C>0$, when $h$ is small enough. Estimates
\eqref{eq_rem_amp} show that
$\|r_i\|_{L^1(\Bm)} = \mathcal{O}(h^{1/3})$. Hence
$\|P_iP_j\|_{L_1(\Bm)}<C$, for some $C>0$ when $h$ is small enough.
Finally we have $\xi_\pm \to \infty$, as $h\to0$, because of
the restrictions \eqref{eq_gamma-rest}.

The first term in \eqref{eq_ua2} is therefore 
\begin{align*}
\lim_{h \to 0} 
\int_{\Bm} i(A_2-A_1)\cdot \big( 
\zeta_1 e^{ix \cdot \xi} P_1(x) \ov{P_2(x)} 
+ \tilde \zeta_1 e^{i\tilde x \cdot \xi} P_1(\tilde x) \ov{P_2(\tilde x)} 
\big) dx
\end{align*}
as $h\to 0$. The terms containing $r_i$ in the products of $P_1$ and $P_2$
are, because of \eqref{eq_rem_amp}, zero in the limit. 
The above limit is thus equal to
\begin{align*}
\lim_{h \to 0} 
\int_{\Bm} i(A_2-A_1)\cdot \big( 
\zeta_1 e^{ix \cdot \xi} 
e^{\Phi_1(x) + \ov{\Phi_2(x)}}  
+ \tilde \zeta_1 e^{i\tilde x \cdot \xi} 
e^{\Phi_1(\tilde x) + \ov{\Phi_2(\tilde x)}} \big) dx.
\end{align*}
Finally we split the integral and  do a change of variable in the second term
and  arrive at the expression
\begin{align}
\label{eq_ua3}
\lim_{h \to 0} \quad 
\int_{B} i(\tilde A_2- \tilde A_1)  \cdot \zeta_1 
e^{ix \cdot \xi} 
e^{\Phi_1(x) + \ov{\Phi_2(x)}} dx,  
\end{align}
for the first term of \eqref{eq_ua2}.

Returning to the second term in \eqref{eq_ua2}, containing $u_1 \nabla
\ov{u_2}$. This is of the
same form as the first one. By doing the above derivation by simply exchanging the
roles of  $u_1$ and $\ov{u_2}$, we similarly see that the second term becomes
\begin{align}
\label{eq_ua4}
\lim_{h \to 0} \quad 
-\int_{B} i(\tilde A_2- \tilde A_1)
\cdot \ov{\zeta_2} 
e^{ix \cdot \xi} 
e^{\Phi_1(x) + \ov{\Phi_2(x)}} dx.  
\end{align}
Now $\zeta_1 \to (\gamma_1 + i \gamma_2)$ and $\ov{\zeta_2} \to - (\gamma_1+i\gamma_2)$,
as $h\to0$.
Thus by using \eqref{eq_ua3} with \eqref{eq_ua4}, we can rewrite \eqref{eq_ua2} as
\begin{align*}
\lim_{h \to 0} &\quad 
\int_{B} i(\tilde A_2-\tilde A_1)  \cdot \big( \zeta_1 
e^{ix \cdot \xi} 
e^{\Phi_1(x) + \ov{\Phi_2(x)}} 
-\ov{\zeta_2} 
e^{ix \cdot \xi} 
e^{\Phi_1(x) + \ov{\Phi_2(x)}}\big) dx \\
&=
2\int_{B} i(\tilde A_2-\tilde A_1)  \cdot (\gamma_1 + i \gamma_2)
e^{ix \cdot \xi} 
e^{\Phi_1^0(x) + \ov{\Phi_2^0(x)}} dx = 0. 
\end{align*}
\end{proof}

The next Proposition shows that \eqref{eq_identity_main_2} holds even 
when the exponential function depending on $\Phi_i^0$, $i=1,2$ is removed.
The argument follows \cite{DosSantos1} closely. We will give details for the convenience
of the reader.

\begin{prop}
\label{prop_32}
The equality \eqref{eq_identity_main_2} implies that 
\begin{equation}
\label{eq_identity_main_2_new}
(\gamma_1 + i \gamma_2) \cdot \int_{B} (\tilde A_2-\tilde 
A_1)e^{ix\cdot\xi}dx=0,
\end{equation}
for $\gamma_1,\gamma_2$ and $\xi$ which satisfy \eqref{eq_gamma-assum} and \eqref{eq_gamma-rest}.
\end{prop}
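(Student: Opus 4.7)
The plan is to strengthen Lemma \ref{inter_prop_32} by using the freedom in the CGO amplitudes granted by Remark \ref{rem_g}, and then to follow \cite{DosSantos1} to remove the weight $e^{\Phi_1^0+\ov{\Phi_2^0}}$ from the resulting identity.

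First, for each admissible triple $(\gamma_1,\gamma_2,\xi)$ satisfying \eqref{eq_gamma-assum}--\eqref{eq_gamma-rest}, I would rerun the construction of the CGO solutions $\tilde u_1,\tilde u_2$ from Section 4 with amplitudes of the more general form $a_j = g_j e^{\Phi_j}$ provided by Remark \ref{rem_g}, where $g_1,g_2\in C^{\infty}(\overline{B})$ satisfy
\[
(\gamma_1+i\gamma_2)\cdot \nabla g_1 = 0\quad\text{and}\quad (-\gamma_1+i\gamma_2)\cdot \nabla g_2 = 0.
\]
Since $g_1,g_2$ are $h$-independent, the amplitude and remainder estimates \eqref{eq_rem_amp} are preserved (Remark \ref{rem_g}), and the reflections \eqref{eq_u_1-cgo}--\eqref{eq_u_2-cgo} together with the entire limit calculation of Lemma \ref{inter_prop_32} carry through verbatim, producing the enriched identity
\[
(\gamma_1+i\gamma_2)\cdot\int_{B}(\tilde A_2-\tilde A_1)\,g_1\,\ov{g_2}\,e^{\Phi_1^0+\ov{\Phi_2^0}}\,e^{ix\cdot\xi}\,dx=0
\]
for every admissible pair $(g_1,g_2)$. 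In the coordinates $y=Tx$ adapted to $\{\gamma_1,\gamma_2,\gamma\}$ with $z=y_1+iy_2$, these constraints become $\partial_{\bar z}g_1=0$ and $\partial_z g_2=0$, so $\ov{g_2}$ is also holomorphic in $z$, and the product $G:=g_1\ov{g_2}$ is an arbitrary smooth function holomorphic in $z$ on $B$ (with smooth $y_3$-dependence).

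A direct computation from \eqref{eq_Phi10}, \eqref{eq_Phi20} (taking the complex conjugate of the second) gives the algebraic key identity
\[
2\partial_{\bar z}\!\left(\Phi_1^0+\ov{\Phi_2^0}\right)= i(\gamma_1+i\gamma_2)\cdot(\tilde A_2-\tilde A_1),
\]
which shows in particular that $\Phi_1^0+\ov{\Phi_2^0}$ is itself holomorphic in $z$ outside $\supp(\tilde A_2-\tilde A_1)\subset N\subset\subset B$. The final step, following \cite{DosSantos1}, is to exploit the $G$-freedom to peel off the factor $e^{\Phi_1^0+\ov{\Phi_2^0}}$ from the enriched identity. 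One uses the partial holomorphicity above together with a Runge-type approximation on the simply connected $z$-slices of $B$ to construct a sequence $G_n$ of functions holomorphic in $z$ for which $G_n\,e^{\Phi_1^0+\ov{\Phi_2^0}}\to 1$ uniformly on $B$ as $n\to\infty$. Substituting $G=G_n$ into the enriched identity and passing to the limit by dominated convergence (justified by the uniform $L^\infty$-bounds on $\Phi_j^0$) then yields exactly \eqref{eq_identity_main_2_new}.

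The main obstacle is the construction of the approximating sequence $G_n$: one must approximate the non-holomorphic weight $e^{-\Phi_1^0-\ov{\Phi_2^0}}$ uniformly on $B$ by functions holomorphic in $z$, making crucial use of the fact that this weight is already holomorphic in $z$ off the compact set $N$ and that each $z$-slice of $B$ is a disc. Once this approximation is in place, the passage to the limit is straightforward, and \eqref{eq_identity_main_2_new} follows.
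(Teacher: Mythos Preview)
Your setup through the enriched identity
\[
(\gamma_1+i\gamma_2)\cdot\int_{B}(\tilde A_2-\tilde A_1)\,G\,e^{\Psi}\,e^{ix\cdot\xi}\,dx=0,\qquad \Psi:=\Phi_1^0+\ov{\Phi_2^0},
\]
for arbitrary $G$ holomorphic in $z$, is correct and is exactly what the paper obtains (the paper inserts a single $g$ into $u_1$, but your $G=g_1\ov{g_2}$ gives the same class). The algebraic relation $2\partial_{\bar z}\Psi = i(\gamma_1+i\gamma_2)\cdot(\tilde A_2-\tilde A_1)$ is also the same key ingredient.

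The gap is in your final step. You propose to find holomorphic $G_n$ with $G_n e^{\Psi}\to 1$ uniformly on $B$ (equivalently on the support of $\tilde A_2-\tilde A_1$). This would force $G_n\to e^{-\Psi}$ uniformly on each $z$-slice of $N$, and a uniform limit of holomorphic functions is holomorphic. But on $N$ one has $\partial_{\bar z}\Psi\ne 0$ unless $\tilde A_1=\tilde A_2$, so $e^{-\Psi}$ is \emph{not} holomorphic there, and such $G_n$ cannot exist. Runge-type approximation only lets you approximate functions that are already holomorphic on the set in question; the holomorphicity of $e^{-\Psi}$ outside $N$ is of no help on $N$ itself. (The direct substitution $g=e^{-\Psi}$ that you may have in mind is exactly what the paper does in Section~5 for the recovery of $q$, but only \emph{after} $\tilde A_1=\tilde A_2$ has been established, which is what makes $e^{-\Psi}$ holomorphic there.)

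The paper circumvents this obstruction by a nonlinear-to-linear reduction rather than by approximation. Using $2\partial_{\bar z}\Psi = i(\gamma_1+i\gamma_2)\cdot(\tilde A_2-\tilde A_1)$ it rewrites the enriched identity as $\int_B G\,e^{ix\cdot\xi}\,\partial_{\bar z}e^{\Psi}\,dx=0$; after Fourier inversion in $y_3$ and Stokes on each disc slice $T_{y_3}$ this becomes $\int_{\partial T_{y_3}} G\,e^{\Psi}\,dz=0$ for all holomorphic $G$. From this the paper builds, via the Cauchy integral and the Plemelj--Sokhotski jump formula, a holomorphic $F$ on $T_{y_3}$ with $F|_{\partial T_{y_3}}=e^{\Psi}|_{\partial T_{y_3}}$; the argument principle shows $F$ is zero-free, so it has a holomorphic logarithm with $\log F|_{\partial T_{y_3}}=\Psi|_{\partial T_{y_3}}$. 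Cauchy's theorem then gives $\int_{\partial T_{y_3}} G\,\Psi\,dz=0$, and reversing Stokes and the Fourier step yields $\int_B G\,e^{ix\cdot\xi}\,\partial_{\bar z}\Psi\,dx=0$, which with $G=1$ is \eqref{eq_identity_main_2_new}. The essential idea you are missing is this passage from $e^{\Psi}$ to $\Psi$ on the boundary via a holomorphic logarithm; no approximation of $e^{-\Psi}$ by holomorphic functions is possible or needed.
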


\begin{proof}
By \eqref{eq_Phi10} and \eqref{eq_Phi20} we have that
\begin{equation}
\label{eq_amplitude_sum}
(\gamma_1 + i\gamma_2)\cdot\nabla 
(\Phi_1^0+\overline{\Phi_2^0})=- i(\gamma_1+i\gamma_2)\cdot (\tilde 
A_1-\tilde A_2) \quad \textrm{in}\quad B. 
\end{equation}
Remark \ref{rem_g} furthermore implies that  the amplitude $e^{\Phi_1}$ in 
the definition of $u_1$ can be replaced  by $ge^{\Phi_1}$, 
if $g\in C^\infty(\overline{B})$ is a solution of 
\begin{equation}
\label{eq_g}
(\gamma_1+ i \gamma_2)\cdot \nabla g=0\quad  \textrm{in}\quad B.  
\end{equation}
Let $\Psi(x) :=\Phi_1^0(x)+\overline{\Phi_2^0}(x)$.
Then instead of \eqref{eq_identity_main_2} we can write, 
\[
(\gamma_1+ i\gamma_2)\cdot \int_{B} (\tilde A_2 -\tilde 
A_1)ge^{ix\cdot\xi}e^{\Psi(x)}dx=0.
\]
We conclude from \eqref{eq_amplitude_sum} that
\[
(\gamma_1 + i\gamma_2)\cdot (\tilde A_2 - \tilde A_1)ge^{\Psi} 
=-i (\gamma_1 + i\gamma_2 )\cdot(g\nabla e^{\Psi}),
\]
and therefore, we get
\begin{equation}
\label{eq_identity_main_3}
\int_{B} ge^{ix\cdot \xi}  (\gamma_1 + i\gamma_2)\cdot\nabla 
e^{\Psi} dx=0,
\end{equation}
for all $g$ satisfying \eqref{eq_g}. 

We pick a $\gamma_3$, with $|\gamma_3|=1$, so that we obtain an orthonormal basis
$\{\gamma_1,\gamma_2, \gamma_3\}$. Let $T$ be the coordinate transform into this
basis, i.e.
$y = Tx = (x\cdot\gamma_1, x\cdot\gamma_2, x\cdot\gamma_3)$. 
Set $z=y_1+i y_2$, so that $\p_{\bar z}=(\p_{y_1}+i\p_{y_2})/2$ and 
\[
(\gamma_1 + i\gamma_2)\cdot\nabla =2\p_{\bar z}. 
\]
Rewriting
\eqref{eq_identity_main_3} using this and a change of variable given by $T$ we have
\begin{align*}
\int_{TB} ge^{iy\cdot \xi}  \p_{\ov{z}} e^{\Psi} dy = 0,
\end{align*}
for all $g$ satisfying \eqref{eq_g}. 

Notice that $y\cdot \xi = y_3\xi_3$, since $\xi$ is in the $y$-coordinates of the
form $(0,0,\xi_3)$.
The above integral is therefore a Fourier transform w.r.t. $\xi_3$.
Let $g\in 
C^\infty(\overline{TB})$ satisfy $\p_{\bar z} g=0$ and 
be independent of $y_3$. Then taking the inverse Fourier transform we write
\begin{align*}
0 &= \int_{T_{y_3}} g \p_{\ov{z}} e^{\Psi} dy_1 dy_2 \\
&= \int_{T_{y_3}}  \p_{\ov{z}} (ge^{\Psi})dy_1 dy_2,
\end{align*}
where $T_{y_3 }:=TB\cap \Pi_{y_3}$ and 
$\Pi_{y_3}=\{(y_1,y_2,y_3):(y_1,y_2)\in\R^2\}$. 
Notice that the boundary of $T_{y_3}$ is smooth.
Multiplying the above by $2i$ and
using Stokes' theorem we get that
\begin{align}
0 &= 
2i \int_{T_{y_3 }} \p_{\ov{z}} (ge^{\Psi})dy_1 dy_2\nonumber  \\
&= \int_{T_{y_3 }}  \Curl (ge^{\Psi}, i ge^{\Psi},0) \cdot n dy_1 dy_2\nonumber  \\
&= \int_{\p T_{y_3 }} (ge^{\Psi}, i ge^{\Psi},0) \cdot dl \nonumber \\
&= \int_{\p T_{y_3}} ge^{\Psi}dz, \label{eq_orth_hol_g}
\end{align}
for all holomorphic functions $g\in C^\infty(\overline{T_{y_3}})$.

Next we shall show that \eqref{eq_orth_hol_g} implies that there exists a 
nowhere vanishing holomorphic function $F\in C(\overline{T_{y_3}})$ such that
\begin{equation}
\label{eq_log}
F|_{\p T_{y_3}}=e^{\Psi}|_{\p T_{y_3}}. 
\end{equation}
 
To this end, we define $F$ to be   
\begin{align*}
F(z)=\frac{1}{2\pi i}\int_{\p T_{y_3}} 
\frac{e^{\Psi(\zeta)}}{\zeta-z}d\zeta,  \quad z\in\C\setminus\p T_{y_3}. 
\end{align*}
The function $F$ is holomorphic away from $\p T_{y_3}$.  As $e^{\Psi}$ is Lipschitz, 
we know because of the Plemelj-Sokhotski-Privalov formula (see e.g. \cite{Kress1}), that  
\begin{equation}
\label{eq_PSP_formula}
\lim_{z\to z_0,z\in T_{y_3}} F(z)-\lim_{z\to z_0,z\notin 
T_{y_3}}F(z)=e^{\Psi(z_0)},\quad z_0\in \p 
T_{y_3}.
\end{equation} 
Now the function $\zeta\mapsto (\zeta-z)^{-1}$ is holomorphic on $T_{y_3}$ when 
$z\notin T_{y_3}$. By choosing $g(\zeta)= (\zeta-z)^{-1}$ in \eqref{eq_orth_hol_g},
we get therefore that $F(z)=0$, 
when $z\notin T_{y_3}$.  Hence, the second limit in 
\eqref{eq_PSP_formula} vanishes, and therefore, $F$ is holomorphic function on 
$T_{y_3}$, such that \eqref{eq_log} holds. 

Next we show that $F$ is non-vanishing in $T_{y_3}$.  When doing so, let  
$\p T_{y_3}$ be parametrized by $z=\gamma(t)$, and $N$ be the number of zeros 
of $F$ in $T_{y_3}$. Then by the argument principle, we get
\[
N=\frac{1}{2\pi i}\int_{\gamma}\frac{F'(z)}{F(z)}dz=\frac{1}{2\pi 
i}\int_{F \circ \gamma}\frac{1}{\zeta}d\zeta=\frac{1}{2\pi 
i}\int_{e^{\Psi \circ \gamma}} \frac{1}{\zeta}d\zeta= 0.
\]
To see that the  last integral is zero, notice that this the winding number of the 
path $e^{\Psi \circ \gamma}$. 
And that $e^{\Psi(\gamma(t))}$ is homotopic to 
the constant contour $\{1\}$, with the homotopy given by 
$e^{s\Psi(\gamma(t))}$, $s\in [0,1]$. 

Next, since $F$ is a non-vanishing holomorphic function on $T_{y_3}$ and 
$T_{y_3}$ is simply connected, it admits a holomorphic logarithm. Hence,  
\eqref{eq_log} implies that 
\[
(\log F)|_{\p T_{y_3}}=\Psi|_{\p T_{y_3}}.
\]
Because $\log F=\Psi$ is continuous on $\p T_{y_3}$, we have by the Cauchy theorem, 
\[
\int_{\p T_{y_3}} g \Psi dz=\int_{\p T_{y_3}} g 
\log F dz=0,
\]
where $g\in C^\infty(\overline{T_{y_3}})$ is  an arbitrary function such that 
$\p_{\bar z}g=0$.  
Using Stokes' formula as in \eqref{eq_orth_hol_g} allows us to write this as 
\[
\int_{T_{y_3}} g\p_{\bar z}\Psi dy_1 dy_2=0. 
\]
Taking the Fourier transform with respect to $y_3$, we get
\[
\int_{T(B)} e^{iy\cdot \xi} g\p_{\bar z}\Psi dy=0,
\]
for all $\xi=(0,0,\xi_3)$, $\xi_3\in\R$.   Hence, returning back to the 
$x$ variable, we obtain that 
\[
(\gamma_1 +i\gamma_2)\cdot \int_{B} e^{i x\cdot  \xi}g(x) \nabla \Psi(x) dx=0,
\]
where $g\in C^\infty(\overline{B})$ is such that $(\gamma_1 + 
i\gamma_2)\cdot \nabla g=0$ in $B$. 

Using \eqref{eq_amplitude_sum},  we finally get
\begin{equation}
\label{eq_identity_main_2_new_zero}
(\gamma_1 + i\gamma_2)\cdot \int_{B} (\tilde A_2-\tilde 
A_1)g(x)e^{ix\cdot\xi}dx=0. 
\end{equation}
Setting $g=1$, we obtain \eqref{eq_identity_main_2_new}.  

\end{proof}

By replacing the vector $\gamma_2$ by $-\gamma_2$ in 
\eqref{eq_identity_main_2_new}, we see that 
\begin{equation}
\label{eq_plane_2}
(\gamma_1 -i\gamma_2)\cdot \int_{B} (\tilde A_2 -\tilde 
A_1)e^{ix\cdot\xi}dx=0. 
\end{equation}
Hence,  \eqref{eq_identity_main_2_new} and \eqref{eq_plane_2} imply that
\begin{equation}
\label{eq_plane_3}
\gamma\cdot \int_{B} (\tilde A_2 -\tilde A_1)e^{ix\cdot\xi}dx=0,
\end{equation}
for all $\gamma\in \textrm{span}\{\gamma_1,\gamma_2\}$ and all $\xi\in \R^{3}$ 
such that 
\eqref{eq_gamma-assum} and  \eqref{eq_gamma-rest} hold.

In the proof of the next Proposition  we see that
\eqref{eq_plane_3} is actually a condition for having 
$\Curl (\tilde A_1 - \tilde A_2) = 0$. This is therefore the 
last step in proving that the DN-map determines the curl of the magnetic
potential. 

\begin{prop}
\label{prop_curl_thm_2}
It follows from \eqref{eq_plane_3} that
\begin{equation}
\label{eq_curl_2}
\nabla \times \tilde A_1= \nabla \times \tilde A_2\quad\textrm{in}\quad 
B. 
\end{equation}
\end{prop}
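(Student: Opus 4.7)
The plan is to read \eqref{eq_plane_3} as a pointwise condition on the Fourier transform of $A := \tilde A_2 - \tilde A_1$. Since $A_1, A_2$ are compactly supported in $N \subset\subset B$ and $B$ is symmetric with respect to $\{x_3 = 0\}$, their even/odd reflections make $\tilde A_1, \tilde A_2$ compactly supported inside $B$, so extending $A$ by zero to $\R^3$ gives a bounded, compactly supported vector field whose Fourier transform
\[
\hat A(\xi) := \int_{\R^3} A(x)\, e^{ix\cdot\xi}\, dx = \int_B (\tilde A_2 - \tilde A_1)(x)\, e^{ix\cdot\xi}\, dx
\]
is continuous (indeed smooth) on $\R^3$. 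With this notation, \eqref{eq_plane_3} says that $\gamma \cdot \hat A(\xi) = 0$ for every admissible triple $(\gamma_1, \gamma_2, \xi)$ and every $\gamma \in \mathrm{span}\{\gamma_1,\gamma_2\}$.

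First I would verify that, for every $\xi \in \R^3$ with $\xi_1^2 + \xi_2^2 > 0$, the admissibility conditions \eqref{eq_gamma-assum} and \eqref{eq_gamma-rest} still allow $\{\gamma_1,\gamma_2\}$ to form an orthonormal basis of $\xi^\perp$. Concretely, I would set
\[
\gamma_1 := \frac{(-\xi_2,\, \xi_1,\, 0)}{\sqrt{\xi_1^2 + \xi_2^2}}, \qquad
\gamma_2 := \frac{\xi \times \gamma_1}{|\xi \times \gamma_1|},
\]
and check directly that $\gamma_{1,3} = 0$, $|\gamma_1| = |\gamma_2| = 1$, $\{\gamma_1, \gamma_2, \xi\}$ is orthogonal, and $\gamma_{2,3} = \sqrt{\xi_1^2 + \xi_2^2}/|\xi| \neq 0$. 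Since $\{\gamma_1, \gamma_2\}$ is then an orthonormal basis of $\xi^\perp$, the hypothesis \eqref{eq_plane_3} forces $\gamma \cdot \hat A(\xi) = 0$ for every $\gamma \in \xi^\perp$, which means $\hat A(\xi)$ is parallel to $\xi$, that is, $\xi \times \hat A(\xi) = 0$.

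Finally, because $\widehat{\Curl A}(\xi) = i \xi \times \hat A(\xi)$, the previous step gives $\widehat{\Curl A}(\xi) = 0$ on the open dense set $\{\xi \in \R^3 : \xi_1^2 + \xi_2^2 > 0\}$. Continuity of $\widehat{\Curl A}$ then yields $\widehat{\Curl A} \equiv 0$ on $\R^3$, and Fourier inversion gives $\Curl A = 0$ in $\R^3$. Restricting to $B$ produces \eqref{eq_curl_2}. I expect the only real obstacle to be the explicit choice of $\gamma_1, \gamma_2$ in the second paragraph: one must check that the somewhat restrictive third-coordinate constraints inherited from the reflection argument still let $\gamma$ sweep out the full plane $\xi^\perp$ for $\xi$ off the $x_3$-axis, after which density and the continuity of $\hat A$ handle the remaining line $\xi_1 = \xi_2 = 0$ for free.
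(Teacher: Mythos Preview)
Your proposal is correct and follows essentially the same route as the paper: the paper constructs the identical vectors $\gamma_1=(-\xi_2,\xi_1,0)/|(-\xi_2,\xi_1,0)|$ and $\gamma_2=(\xi\times\gamma_1)/|\xi\times\gamma_1|$, deduces that $\hat A(\xi)$ is parallel to $\xi$ off the $x_3$-axis, and then passes to all $\xi$ (invoking analyticity of the Fourier transform where you use continuity, which is equally sufficient) before taking the inverse transform.
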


\begin{proof}

Assume that  $\xi\in\R^3$ is not on the line $L:=(0,0,t)$, $t \in \R$. Then the 
vectors $\gamma_1$ and $\gamma_2$ given by
\begin{alignat}{2}
\label{eq_vectors_special}
\tilde \gamma_1 &:= (-\xi_2, \xi_1,0) , \quad
&\gamma_1:=\tilde \gamma_1/|\tilde \gamma_1|, \nonumber \\
\tilde \gamma_2 &:= \xi \times \gamma_1, \quad
&\gamma_2:=\tilde \gamma_2/|\tilde \gamma_2|,
\end{alignat}
where $\xi \times \gamma_1$ stands for the cross product,
satisfy \eqref{eq_gamma-rest} and \eqref{eq_gamma-assum}.
Thus, for any vector $\xi\in\R^3 \setminus L$,
\eqref{eq_plane_3} says that
\begin{equation}
\label{eq_plane_4}
\gamma \cdot  v(\xi)=0, \quad v(\xi):=\widehat{\tilde 
A_2\chi}(\xi)-\widehat{\tilde A_1\chi}(\xi),
\end{equation}
for all $\gamma\in \textrm{span}\{\gamma_1,\gamma_2\}$. Here $\chi$ is the 
characteristic function of the set $B$.  
For any vector $\xi\in \R^3$, we have the following decomposition,
\[
v(\xi)=v_{\xi}(\xi)+v_\perp(\xi),
\] 
where $\Re v_{\xi}(\xi)$, $\Im v_{\xi}(\xi)$ are multiples of $\xi$, and  $\Re 
v_\perp(\xi)$, $\Im v_\perp(\xi)$ are orthogonal to $\xi$.  Now we have $\Re 
v_\perp(\xi), \Im v_\perp(\xi)\in \textrm{span}\{\gamma_1,\gamma_2\}$, and 
therefore, it follows from \eqref{eq_plane_4} 
that $v_\perp(\xi)=0$, 
for all $\xi\in \R^3\setminus L$. 

Hence, 
$v(\xi)=\alpha(\xi)\xi$,
so that that 
\[
\xi\times v(\xi)=0,
\]
for all $\xi\in \R^3\setminus L$, and thus, everywhere, 
by the analyticity of the Fourier transform.  Taking the inverse Fourier 
transform, we obtain \eqref{eq_curl_2}.
\end{proof}

\section{Determining the electric potential}

In order to complete the proof  of Theorem \ref{thm_2_inverse}, we need 
to show that the electric potential is also determined by the
DN-map. Again we assume that 
$A_j,q_j$ and $\Gamma_j$, $j=1,2$ 
are as in Theorem \ref{thm_2_inverse} and that the DN-maps satisfy
\eqref{eq_data_inv}, and hence  \eqref{eq_data_inv_1}.

Since $B$ is simply connected,
it follows from the Helmholtz decomposition of $\tilde A_1-\tilde A_2$  and \eqref{eq_curl_2} that 
there exists $\psi\in C^{1,1}(\ov{B})$ with $\psi=0$ near $\p B$ 
such that 
\[
\tilde A_1=\tilde A_2+\nabla \psi\quad\textrm{in}\quad B. 
\]
We extend $\psi$ to a function of class $C^{1,1}$ on all of $\R^3$ such that 
$\psi=0$ on $\R^3\setminus\ov{B}$. Then 
\[
\tilde A_1=\tilde A_2+\nabla \psi\quad\textrm{in}\quad \R^3. 
\]
Since $\tilde{\Gamma}_j \subset \CHS \setminus \ov{N}$, $j=1,2$, and $\CHS \setminus \ov{N}$
is connected, we can assume that $\psi=0$ on $\HS \setminus N$ and hence,
we have that $\psi=0$ on $\tilde \Gamma_1\cup\tilde \Gamma_2$. 
It follows then
from Lemma \ref{gauge_inv} part (\textit{i}) and \eqref{eq_data_inv_1}
that for all $f$ with 
$\supp(f)\subset\tilde \Gamma_2$,
\[
\Lambda_{A_1,q_1}(f)|_{\tilde \Gamma_1}=
\Lambda_{A_2,q_2}(f)|_{\tilde \Gamma_1}=\Lambda_{A_2+\nabla 
\psi,q_2}(f)|_{\tilde \Gamma_1}=\Lambda_{A_1,q_2}(f)|_{\tilde \Gamma_1}.
\]
Therefore, we can apply Proposition \ref{identity} with $A_1=A_2$ and get
\begin{equation}
\label{eq_recovering_q}
\int_{N}(q_1-q_2)u_1\ov{u_2}dx=0,
\end{equation}
for all $u_1\in W_1(N)$ and $u_2\in W_2^*(N)$. 

Choosing in \eqref{eq_recovering_q} $u_1$ and $u_2$ as the complex geometric 
optics solutions, given by \eqref{eq_u_1-cgo} and \eqref{eq_u_2-cgo}, passing to
$\Bm$, and 
letting $h\to 0$, we have
\begin{equation}
\label{eq_recovering_q_2}
\int_{B}(\tilde q_1-\tilde 
q_2)e^{ix\cdot\xi}e^{\Phi_1^0(x)+\overline{\Phi_2^0 (x)}}dx=0.
\end{equation}
By Remark \ref{rem_g}
$e^{\Phi_1}$ in the definition \eqref{eq_u_1-cgo} of $u_1$ can be replaced  by 
$ge^{\Phi_1}$ if $g\in C^\infty(\overline{B})$ is a solution of 
\[
(\gamma_1+ i \gamma_2)\cdot \nabla g=0\quad  \textrm{in}\quad B.  
\]
Then \eqref{eq_recovering_q_2} can be replaced by 
\[
\int_{B}(\tilde q_1-\tilde 
q_2)g(x)e^{ix\cdot\xi}e^{\Phi_1^0(x)+\overline{\Phi_2^0 (x)}}dx=0.
\]
Now \eqref{eq_amplitude_sum} has the form,
\[
(\gamma_1 + i\gamma_2)\cdot\nabla (\Phi_1^0+\overline{\Phi_2^0})=0\quad 
\textrm{in}\quad B, 
\]
since we have that $\tilde A_1 = \tilde A_2$.
Thus, we can take $g=e^{-(\Phi_1^0+\overline{\Phi_2^0)}}$ and  obtain that 
\begin{equation}
\label{eq_recovering_q_3}
\int_{B}(\tilde q_1-\tilde q_2)e^{ix\cdot\xi}dx=0,
\end{equation}
for all $\xi\in \R^3$ such that there exist $\gamma_1,\gamma_2\in \R^3$, 
satisfying \eqref{eq_gamma-assum} and  \eqref{eq_gamma-rest}.
Since for any $\xi\in\R^3$ not of the form $\xi=(0,0,\xi_3)$, the vectors, 
given by \eqref{eq_vectors_special}, satisfy \eqref{eq_gamma-assum} and  \eqref{eq_gamma-rest},
we conclude 
that \eqref{eq_recovering_q_3} holds for all  $\xi\in\R^3$ 
except those of the form $\xi=(0,0,\xi_3)$, and therefore, by analyticity of the Fourier 
transform, for all $\xi\in \R^3$. Hence,  $q_1=q_2$ in $\Bm$. 
This completes the proof of Theorem \ref{thm_2_inverse}.

\section{Appendix}

\subsection{Magnetic Green's formulas}
\label{sec:EGF}

Let us first recall, following \cite{DosSantos1},  the standard Green formula 
applied to the magnetic Schr\"odinger operator.  

\begin{lem} \label{MagGFI} 
Suppose that $\Omega \subset \R^3$ is open and bounded, with piecewise $C^{\,1}$
boundary. Let $A \in W^{1,\infty}(\Omega,\R^3)$ and  $q \in 
L^{\infty}(\Omega)$. Then we have, 
\begin{align*}
& \quad (\Ld u, v)_{L^2(\Omega)} - (u , L_{A,\ov{q}} v)_{L^2(\Omega)} \\
&= (u , (\partial_n + iA\cdot n)
v)_{L^2(\partial \Omega)}-  ((\partial_n + iA\cdot n) u, v)_{L^2(\partial 
\Omega)},
\end{align*}
for all $u,v \in H^1(\Omega)$, with $\Delta u, \Delta v \in L^2(\Omega)$,
where $n$ is the exterior unit normal to $\p \Omega$.
\end{lem}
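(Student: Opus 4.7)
The plan is to expand $\Ld$ in its non-magnetic form, separate the commutator $(\Ld u, v) - (u, L_{A,\ov{q}} v)$ into four pieces, and then bundle the boundary contributions into the asserted expression. Writing out the square, one has
\[
\Ld u = -\Delta u - 2iA\cdot\nabla u - i(\nabla\cdot A)u + |A|^2 u + q u,
\]
and similarly for $L_{A,\ov{q}} v$ with $q$ replaced by $\ov{q}$. Since $A$ and $\nabla\cdot A$ are real and $|A|^2$ is real, the $|A|^2$-terms and the $q,\ov{q}$-terms cancel trivially in the commutator, so only the Laplacian, the first-order term $-2iA\cdot\nabla$, and the zeroth-order term $-i(\nabla\cdot A)$ contribute.

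First I would apply the classical second Green identity to the Laplacian piece to obtain
\[
(-\Delta u, v)_{L^2(\Omega)} - (u, -\Delta v)_{L^2(\Omega)}
= (u, \p_n v)_{L^2(\p\Omega)} - (\p_n u, v)_{L^2(\p\Omega)}.
\]
Next I would integrate by parts on the first-order piece. Using that $A$ is real and writing $A\cdot\nabla(u\ov v) = (A\cdot\nabla u)\ov v + u\,A\cdot\nabla\ov v$, one gets
\[
(-2iA\cdot\nabla u, v)_{L^2(\Omega)} - (u, -2iA\cdot\nabla v)_{L^2(\Omega)}
= -2i\int_\Omega A\cdot\nabla(u\ov v)\,dx
= 2i\int_\Omega (\nabla\cdot A) u\ov v\,dx - 2i\int_{\p\Omega} (A\cdot n) u\ov v\,dS.
\]
The zeroth-order piece contributes exactly $-2i\int_\Omega(\nabla\cdot A)u\ov v\,dx$, so the interior pieces cancel and only the boundary term $-2i\int_{\p\Omega}(A\cdot n)u\ov v\,dS$ survives.

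Finally I would combine the surviving boundary contributions. Expanding
\[
(u, (\p_n + iA\cdot n) v)_{L^2(\p\Omega)} - ((\p_n + iA\cdot n) u, v)_{L^2(\p\Omega)}
= (u, \p_n v)_{L^2(\p\Omega)} - (\p_n u, v)_{L^2(\p\Omega)} - 2i\int_{\p\Omega}(A\cdot n) u\ov v\,dS,
\]
where the factor $-2i$ arises because the $iA\cdot n$ inside the first inner product is conjugated to $-iA\cdot n$, matches precisely the sum of the Laplacian and magnetic boundary contributions computed above, which proves the identity.

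The only real subtlety is that the hypotheses give $u,v\in H^1(\Omega)$ with $\Delta u,\Delta v\in L^2(\Omega)$ rather than $H^2$-regularity, so the boundary traces $\p_n u,\p_n v$ must be interpreted as elements of $H^{-1/2}(\p\Omega)$ via the standard pairing $\langle \p_n u,\varphi\rangle_{\p\Omega} = (\Delta u,\varphi)_{L^2(\Omega)} + (\nabla u,\nabla\varphi)_{L^2(\Omega)}$. The integrations by parts above go through in this setting, either by density of $C^\infty(\ov\Omega)$ in $\{u\in H^1(\Omega):\Delta u\in L^2(\Omega)\}$ or by invoking the duality pairing directly. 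The piecewise $C^1$ regularity of $\p\Omega$ is enough to support these trace arguments, and the Lipschitz bounds on $A$ and $\nabla\cdot A$ ensure all the interior integrals make sense and the integration-by-parts identity for $A\cdot\nabla(u\ov v)$ is valid.
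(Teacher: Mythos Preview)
Your argument is correct: the expansion of $\Ld$, the cancellation of the zeroth-order real terms, Green's second identity for the Laplacian, and the divergence-theorem handling of the first-order magnetic piece are all carried out properly, and your verification that the two boundary contributions combine into $(u,(\p_n+iA\cdot n)v)_{L^2(\p\Omega)}-((\p_n+iA\cdot n)u,v)_{L^2(\p\Omega)}$ is accurate. Note that the paper does not actually supply a proof of this lemma; it simply recalls the identity as the standard magnetic Green formula, citing \cite{DosSantos1}, so your direct computation is exactly the expected argument.
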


We shall also need a version of the above result where $\Omega$ 
is replaced by $\HS$. We shall then need to put some restrictions on $v$ and 
$u$, 
because $\HS$ is unbounded. To this end we assume that $u$ and $v$ are 
solutions to
the Helmholtz equation outside some compact set, 
that obey some form of radiation condition. To be 
precise, let $A \in \Wc^{1,\infty}(\HS,\R^3)$,  $q \in
\Lc^{\infty}(\HS)$, and let 
$u\in H^2_{\textrm{loc}}(\ov{\HS})$ be such that 
\[
(L_{A,q}-k^2) u=0\quad \textrm{in}\quad \HS,
\]
$\supp(u|_{\p\HS})$ is compact, and $u$ is outgoing.  Assume also that $v\in 
H^2_{\textrm{loc}}(\ov{\HS})$ satisfies 
\[
(L_{A,\ov{q}}-k^2)v\in L^2_{\textrm{comp}}(\CHS), 
\]
$\supp(v|_{\p \HS})$ is compact, and $v$ is incoming.

\begin{lem} \label{MagGFII} 
With $u$ and $v$ as above, we have 
\begin{equation}
\label{eq:MagGFIIeq}
\begin{aligned}
& \quad ((\Ld-k^2) u, v)_{L^2(\HS)} - (u , (L_{A,\ov{q}}-k^2) 
v)_{L^2(\HS)} \\
&= (u , (\partial_n + iA\cdot n)
v)_{L^2(\p\HS)}-  ((\partial_n + iA\cdot n) u, v)_{L^2(\p\HS)}.
\end{aligned}
\end{equation}
\end{lem}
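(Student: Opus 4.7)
The plan is to apply the bounded-domain Green formula of Lemma \ref{MagGFI} on the truncation $\Omega_R := \HS \cap B(0,R)$ and then let $R \to \infty$. The boundary of $\Omega_R$ splits as $l_R \cup S_R$, with $l_R := \p\HS \cap B(0,R)$ the flat piece and $S_R := \HS \cap \p B(0,R)$ the lower hemisphere. On $l_R$ the outer unit normal agrees with the $n = (0,0,1)$ used in the statement, while on $S_R$ it is the radial direction $x/|x|$.

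First I would fix $R_0$ large enough that
\[
\supp(A) \cup \supp(q) \cup \supp(u|_{\p\HS}) \cup \supp(v|_{\p\HS}) \cup \supp((L_{A,\ov{q}}-k^2)v) \subset B(0,R_0),
\]
and invoke Lemma \ref{MagGFI} on $\Omega_R$ for $R > R_0$. On the left-hand side, the term $((L_{A,q}-k^2)u,v)_{L^2(\Omega_R)}$ vanishes identically because $u$ solves the equation in $\HS$, while $(u,(L_{A,\ov{q}}-k^2)v)_{L^2(\Omega_R)}$ already equals its value on all of $\HS$ thanks to the compactness of $\supp((L_{A,\ov{q}}-k^2)v)$. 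On the right-hand side, the two boundary integrals over $l_R$ coincide with the corresponding integrals over $\p\HS$ in \eqref{eq:MagGFIIeq}, since the integrands are supported inside $B(0,R_0) \cap \p\HS$.

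The crux is to show that the remaining contribution $I(R)$ coming from $S_R$ vanishes as $R\to\infty$. Since $A\equiv 0$ on $S_R$ for $R > R_0$, this contribution reduces to
\[
I(R) = \int_{S_R}\bigl(u\,\ov{\p_r v} - \p_r u\cdot \ov{v}\bigr)\,dS = \int_{S_R} u\,\ov{(\p_r v + ikv)}\,dS - \int_{S_R}(\p_r u - iku)\,\ov{v}\,dS,
\]
after adding and subtracting $ik\,u\ov{v}$. The outgoing condition on $u$ gives $\p_r u - iku = o(1/r)$, and the incoming condition on $v$ gives $\p_r v + ikv = o(1/r)$; combined with the expected $O(1/r)$ pointwise decay of $u$ and $v$ and the fact that $|S_R| = O(R^2)$, each summand is $o(1)$.

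The main obstacle is establishing this $O(1/r)$ decay of $u$ and $v$. To handle it, I would observe that outside $B(0,R_0)$ both $u$ and $v$ satisfy the free Helmholtz equation $(-\Delta - k^2)w = 0$ in $\HS$ with vanishing Dirichlet trace on $\p\HS\setminus B(0,R_0)$; an odd reflection across $\p\HS$ then turns each into a full-space outgoing (respectively incoming) solution of the free Helmholtz equation, to which the classical Atkinson--Wilcox asymptotic expansion applies. Once this decay is secured, passing to the limit $R \to \infty$ in the truncated Green identity produces \eqref{eq:MagGFIIeq}.
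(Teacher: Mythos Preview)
Your outline matches the paper's exactly up through the reduction to showing that the hemisphere contribution
\[
I(R)=\int_{S_R}\bigl(u\,\ov{\p_r v}-\p_r u\,\ov v\bigr)\,dS
=\int_{S_R} u\,\ov{(\p_r v+ikv)}\,dS-\int_{S_R}(\p_r u-iku)\,\ov v\,dS
\]
tends to zero. Where you diverge is in how you kill these two integrals. You appeal to pointwise $O(1/r)$ decay of $u$ and $v$, obtained by odd reflection across $\p\HS$ (legitimate, since both have vanishing Dirichlet trace outside $B(0,R_0)$) and the Atkinson--Wilcox expansion for exterior Helmholtz solutions; combined with the uniform $o(1/r)$ from the radiation conditions and $|S_R|=O(R^2)$ this gives $I(R)=o(1)$. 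The paper instead applies Cauchy--Schwarz to each term and then shows, by a Rellich-type argument (expanding $\int_{S_R}|\p_r u-iku|^2$ and using Green's formula on the shell to see that $\int_{S_R}\Im(u\,\p_r\ov u)$ is independent of $R$), that $\int_{S_R}|u|^2$ and $\int_{S_R}|v|^2$ stay bounded. Your route is shorter and yields sharper pointwise information, at the cost of importing the exterior asymptotic expansion; the paper's route is entirely self-contained and uses nothing beyond the radiation condition and Green's formula, which is in keeping with the tools already developed in the appendix.
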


\begin{proof}
Let $B_R := B(x_0,R)$ be an open ball in $\R^3$ of radius $R$,  and 
choose $R>0$ large enough so that 
\[
\supp(A),\supp(q)\subset B_R. 
\]
Set $\Omega = \HS \cap B_R$.  By Lemma \ref{MagGFI},  we know that
\begin{align*}
& \quad ((\Ld-k^2) u, v)_{L^2(\Omega)} - (u , (L_{A,\ov{q}}-k^2) 
v)_{L^2(\Omega)} \\
&= (u , (\partial_n + iA\cdot n)
v)_{L^2(\partial \Omega)}-  ((\partial_n + iA\cdot n) u, v)_{L^2(\partial 
\Omega)}. 
\end{align*}
Thus, to obtain \eqref{eq:MagGFIIeq} we need to show that 
\begin{equation}
\label{eq_green_pr1}
\int_{\p B_R\cap \HS} (u  \ov{\partial_n  v}  - \partial_n u\ov{v} ) d S_R 
\to 0, \quad R\to \infty.
\end{equation}
Let us rewrite the left hand side of the above as follows, 
\begin{align*}
 \int_{\p B_R\cap \HS}(\partial_n \ov{v} -ik\ov{v})udS_R - \int_{\p B_R\cap 
\HS}(\partial_n u -iku)\ov{v}dS_R. 
\end{align*}
We show that first term goes to zero as $R \to \infty$. The second term can be
handled in the same way. Applying Cauchy-Schwarz gives
\begin{align*}
\bigg|\int_{\p B_R\cap \HS}(\partial_n \ov{v} -ik\ov{v})u dS_R\bigg|^2
\leq \int_{\p B_R\cap \HS} |\partial_n \ov{v} -ik\ov{v}|^2dS_R \int_{\p B_R\cap 
\HS} | u |^2 dS_R.
\end{align*}
Here the first integral goes to zero, since 
$\ov{\partial_n \ov{v} -ik\ov{v}} = \partial_n v +ikv$ and $|\partial_n v
+ikv|^2$ is $o(1/r^2)$ as $r=|x|\to \infty$, since $v$ is incoming. 
We conclude the proof by showing that the second integral is bounded as $R \to
\infty$.

To this end we let $R_2>R_1$, 
where $R_1$ is such that
\[
    \supp(A),\supp(q)\subset B(x_0,R_1),
\]
and set $B_j := B(x_0,R_j)$, $j=1,2$ and $U:= (B_2 \setminus B_1) \cap \HS$. 
We multiply the Sommerfeld condition \eqref{eq:SRC} by its complex conjugate and integrate over
$\p B_2 \cap \HS$, which gives
\begin{align}
&\int_{\p B_2 \cap  \HS}  \big(k^2|u |^2 + |\partial_n u |^2 + 2k \Im(u \partial_n \ov{u} 
) \big)dS  \nonumber\\
=&\int_{\p B_2 \cap  \HS} |\p_n u-iku|^2dS \nuoli 0,
\label{eq:lim1}
\end{align}
as $R_2 \to \infty$ and where $n$ is the outer unit normal vector to $B_2$.

By Green's formula we have on the other hand that
\begin{align}
\int_{\p U} u \p_n \ov{u} - \ov{u} \p_nu = \int_{U} 
u\Delta \ov{u} - \Delta\ov{u}u  = 0.
\label{eq:gf2}
\end{align}
We may assume that $u|_{\p U \cap \p\HS} = 0$,
since $\supp(u|{ \p \HS})$ is compact. We can thus write \eqref{eq:gf2} as 
\begin{align}
\int_{\p B_2 \cap \HS}  \Im(u \partial_n \ov{u}) 
= \int_{\p B_1 \cap \HS}  \Im(u \partial_n \ov{u}).
\end{align}
But this implies that the $\int |u|^2$ and $\int |\p_nu|^2$ terms stay bounded in
the limit \eqref{eq:lim1}.
\end{proof}

\subsection{The unique continuation principle}

In this work we make heavy use of the so called \textit{unique
continuation principle}. The unique continuation principle can be seen as an 
extension of the  familiar property that an analytic function that 
is zero on some
open set is identically zero.

Let $\Omega\subset\R^n$ be an open connected set, and let
\[
Pu=\sum_{i,j=1}^n a_{ij}(x)\p_i\!\p_j\!u+\sum_{i} b_i(x)\p_i\!u+c(x)u. 
\]
Here $a_{ij}\in C^1(\ov{\Omega})$ are real-valued, $a_{ij}=a_{ji}$, and there 
is $C>0$ so that 
\[
\sum_{i,j=1}^n a_{ij}(x) \xi_i \xi_j \geq C|\xi|^2, \quad x \in 
\ov{\Omega},\quad \xi\in\R^n.
\]
Furthermore, 
$b_i\in L^\infty(\Omega,\C)$ and $c\in L^\infty(\Omega, \C)$. We have the 
following result, see  \cite{Choulli} and \cite{Leis1}.

\begin{thm}\label{UCP} 
Let $u\in H^2_{\emph{loc}}(\Omega)$ be such that $Pu=0$ in $\Omega$. Let 
$\omega\subset\Omega$ be open non-empty. If $u=0$ on $\omega$, then $u$ 
vanishes identically in $\Omega$. 

\end{thm}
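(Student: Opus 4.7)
The plan is to prove unique continuation by the classical propagation-of-zeros argument based on a Carleman estimate. Let $E \subset \Omega$ denote the set of points $x$ such that $u$ vanishes in some neighborhood of $x$. Then $E$ is open by construction, and $E \supset \omega$, so $E$ is nonempty. Since $\Omega$ is connected, it suffices to show that $E$ is relatively closed in $\Omega$, i.e. that for any $x_0 \in \overline{E} \cap \Omega$ we have $x_0 \in E$. I would pick $y \in E$ close to $x_0$ and choose a ball $B(y,r) \subset E$ with $x_0 \in \partial B(y,r)$ and $\overline{B(y, r+\delta)} \subset \Omega$. After translating, assume $y = 0$.

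The key tool is a Carleman estimate with the convex weight $\varphi_\tau(x) = e^{-\tau |x|^{-\alpha}}$ for small $\alpha > 0$ and large $\tau$: for $w \in C_0^\infty(\Omega \setminus \{0\})$ one has
\begin{equation*}
\tau^3 \int \varphi_\tau^2 |w|^2\,dx + \tau \int \varphi_\tau^2 |\nabla w|^2\,dx \le C \int \varphi_\tau^2 |Pw|^2\,dx.
\end{equation*}
I would establish this first for the principal part $P_0 = \sum a_{ij}\partial_i\partial_j$ by the standard H\"ormander scheme (conjugate by $\varphi_\tau$, split into symmetric and skew parts, and integrate by parts), using that the weight $\psi(x) = |x|^{-\alpha}$ is strongly pseudoconvex with respect to $P_0$ when $\alpha$ is small. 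The lower order terms of $P$, whose coefficients are only $L^\infty$, are then absorbed into the left-hand side by taking $\tau$ large, thanks to the powers $\tau^3$ and $\tau$ that dominate $\|c\|_\infty^2$ and $\|b_i\|_\infty^2$ respectively.

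To exploit this, I would fix a cutoff $\chi \in C_0^\infty(B(0, r+\delta))$ with $\chi \equiv 1$ on $B(0, r+\delta/2)$ and set $w := \chi u$. Since $Pu = 0$, we have $Pw = [P,\chi]\,u$, supported in the shell $S_1 = \{r + \delta/2 \le |x| \le r + \delta\}$ where $\varphi_\tau$ is exponentially small. On the other hand, on the inner shell $S_2 = \{r \le |x| \le r + \delta/4\}$ the weight $\varphi_\tau$ is much larger than on $S_1$. Plugging $w$ into the Carleman estimate, and using that $u \equiv 0$ on $B(0,r)$ so the integral on the left reduces to an integral over $\{|x|\ge r\}$, the right-hand side is bounded by $\sup_{S_1}\varphi_\tau^2$ times a fixed constant depending on $\|u\|_{H^1(S_1)}$. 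Restricting the left-hand side to $S_2$, dividing by $\inf_{S_2}\varphi_\tau^2$, and letting $\tau \to \infty$ forces $u \equiv 0$ on $S_2$, and in particular on a neighborhood $V$ of $x_0$. Hence $x_0 \in E$, which closes the connectedness argument.

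The main obstacle is the Carleman estimate itself: one must verify the strong pseudoconvexity of $\psi(x) = |x|^{-\alpha}$ with respect to the principal symbol $p_0(x,\xi) = \sum a_{ij}(x)\xi_i\xi_j$, which reduces to a positivity property of a Poisson bracket on the characteristic set. The assumption $a_{ij} \in C^1(\overline{\Omega})$ is precisely what makes the integration-by-parts in the commutator computation legitimate, while the gains of $\tau^{3/2}$ and $\tau^{1/2}$ on the left are tailored to absorb the $L^\infty$ zeroth and first order perturbations. A minor technical point is that $u \in H^2_{\mathrm{loc}}$ only, so applying the estimate to $w = \chi u$ requires a routine density argument approximating $w$ by $C_0^\infty$ functions in the $H^2$-norm.
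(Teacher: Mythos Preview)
The paper does not prove this theorem; it is quoted from the literature (the references to Choulli and Leis in the appendix), so there is no in-paper argument to compare against. Your sketch is exactly the standard Carleman-estimate route that those references take, and the overall architecture---open/closed argument on the vanishing set $E$, a radial Carleman inequality for the principal part, absorption of the $L^\infty$ first- and zeroth-order coefficients by the gains $\tau^{3}$ and $\tau$, a cutoff producing a commutator supported in an outer shell, and a comparison of the weight on inner versus outer shells---is the correct one.

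There is, however, one concrete slip that would make the key step fail as written. With $\varphi_\tau(x)=e^{-\tau|x|^{-\alpha}}$ the weight is \emph{increasing} in $|x|$, since $|x|^{-\alpha}$ is decreasing; thus $\varphi_\tau$ is larger on the outer shell $S_1=\{r+\delta/2\le|x|\le r+\delta\}$ than on the inner shell $S_2=\{r\le|x|\le r+\delta/4\}$, contrary to what you assert, and the ratio $\sup_{S_1}\varphi_\tau^{2}/\inf_{S_2}\varphi_\tau^{2}$ blows up rather than tending to zero as $\tau\to\infty$. The fix is simply to drop the minus sign and use $\varphi_\tau(x)=e^{\tau|x|^{-\alpha}}$ (equivalently, keep $\psi(x)=|x|^{-\alpha}$ but with the convention $\varphi_\tau=e^{\tau\psi}$); then $\varphi_\tau$ decreases in $|x|$, the level spheres are pseudoconvex with the correct orientation for propagating the zero set \emph{outward} across $\{|x|=r\}$, and your shell comparison and limiting argument go through exactly as you describe.
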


\begin{cor} \label{UCP_boundary}  Assume that $\p \Omega$ is of class $C^2$. 
Let $\Gamma\subset \p \Omega$ be open non-empty. Let $u\in H^2(\Omega)$ be such 
that $Pu=0$ in $\Omega$. Assume that 
\[
u=\mathcal{B}_{\nu}u=0\quad \textrm{on}\quad \Gamma.
\]
Here $\mathcal{B}_{\nu}u$ is  the conormal derivative of $u$, given by
\[
\mathcal{B}_{\nu}u=\sum_{i, j=1}^n \nu_i (a_{ij}\p_{j} u)|_{\p \Omega}\in 
H^{1/2}(\p \Omega).
\]
Then $u$ vanishes identically in $\Omega$. 

\end{cor}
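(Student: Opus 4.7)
The plan is to deduce the boundary statement from the interior unique continuation theorem (Theorem \ref{UCP}) by extending $u$ by zero across $\Gamma$ and then invoking Theorem \ref{UCP} twice. Throughout, I would work locally near a point $x_0\in\Gamma$ and then globalize.

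First I would localize. Pick $x_0\in\Gamma$ and, using that $\p\Omega$ is of class $C^2$, choose a $C^2$ diffeomorphism $\Phi:V\to W$ from a neighborhood $V$ of $x_0$ onto an open set $W\subset\R^n$ such that $\Phi(V\cap\Omega)=W\cap\{y_n<0\}$ and $\Phi(V\cap\p\Omega)=W\cap\{y_n=0\}$. After a possible shrinking of $V$, we may assume $\Phi(V\cap\p\Omega)\subset\Phi(\Gamma)$. The pushforward of $P$ by $\Phi$ is again a second order elliptic operator with $C^1$ principal part and bounded zeroth and first order terms, so it falls under the scope of Theorem \ref{UCP}. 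Thus it suffices to treat the problem when $\Omega$ is locally a half space and $\Gamma$ is a piece of the flat boundary.

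Next I would verify that the boundary data $u|_\Gamma=\mathcal{B}_\nu u|_\Gamma=0$ in fact forces $u$ and the full gradient $\nabla u$ to vanish on $\Gamma$ in the trace sense. Since $u|_\Gamma=0$, all tangential derivatives of $u$ vanish on $\Gamma$. The covector $b_j:=\sum_i\nu_i a_{ij}$ is transverse to $T\Gamma$ because $b\cdot\nu=\sum_{ij}a_{ij}\nu_i\nu_j>0$ by ellipticity, so the conormal derivative $\mathcal{B}_\nu u$ captures the missing non-tangential directional derivative. Together with the vanishing of tangential derivatives, $\mathcal{B}_\nu u=0$ on $\Gamma$ thus yields $\partial_j u|_\Gamma=0$ for every $j$. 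In the flattened picture this reduces to $u=\partial_n u=0$ on $\{y_n=0\}\cap W$.

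With this in hand I would define the extension
\[
\tilde u(y):=\begin{cases} u(y), & y_n\le 0,\\ 0, & y_n>0,\end{cases}
\]
on $W$. Because $u$ and $\nabla u$ have vanishing traces on $\{y_n=0\}\cap W$, a standard gluing argument shows $\tilde u\in H^2(W)$, and computing $P\tilde u$ distributionally in $W$ produces no boundary contributions: on the lower side one gets $Pu=0$, on the upper side one gets $0$, and the jump terms in $\partial_j\tilde u$ and $\partial_i\partial_j\tilde u$ that would formally appear are absent precisely because $u$ and $\nabla u$ vanish on the interface. Hence $P\tilde u=0$ in $W$ in the weak sense, and Theorem \ref{UCP} (applied on the connected open set $W$ to $\tilde u$, which vanishes on the non-empty open set $W\cap\{y_n>0\}$) gives $\tilde u\equiv 0$ in $W$. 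Consequently $u$ vanishes on the non-empty open set $V\cap\Omega$.

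Finally, a second application of Theorem \ref{UCP}, this time on the connected set $\Omega$ with $\omega=V\cap\Omega$, yields $u\equiv 0$ in $\Omega$. The one step that requires a little care is the $H^2$ gluing combined with the verification that $P\tilde u=0$ across the interface; this is where the full vanishing of $\nabla u|_\Gamma$ (not merely of $\mathcal{B}_\nu u$) is essential, and it is also the reason one needs both the Dirichlet and the conormal condition in the hypothesis.
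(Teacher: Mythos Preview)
The paper does not give its own proof of this corollary; it merely states Theorem~\ref{UCP} and Corollary~\ref{UCP_boundary} in the appendix with a reference to \cite{Choulli} and \cite{Leis1}, so there is nothing to compare against directly.

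Your argument is the standard one and is correct: flatten the boundary locally by a $C^2$ diffeomorphism (which preserves the hypotheses on $P$, since the transformed principal coefficients remain $C^1$ and the lower-order ones remain $L^\infty$), use $u|_\Gamma=0$ to kill tangential derivatives and then ellipticity ($\sum a_{ij}\nu_i\nu_j>0$) together with $\mathcal{B}_\nu u|_\Gamma=0$ to recover the normal derivative, extend by zero across the flat piece to obtain an $H^2$ function solving $P\tilde u=0$ on a full neighborhood, and apply Theorem~\ref{UCP} twice. This is exactly the approach one finds in the cited references, so your proof is in line with what the paper intends.
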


\section*{Acknowledgements}  

I like to thank my thesis advisors Katya Krupchyk and Lassi P\"aiv\"arinta for all the advice 
I have recived during the preparation of this paper. 
This research has been supported by the Academy of Finland (project 255580).

\bibliographystyle{plain}
\bibliography{ref}

\end{document}